\documentclass{amsart}

\usepackage{amsmath,amsthm,amssymb}
\usepackage{enumitem}
\usepackage{graphicx}
\usepackage{float}
\usepackage[margin=1.5in]{geometry}
\numberwithin{equation}{section}

\theoremstyle{plain}
\newtheorem{thm}{Theorem}[section]

\newtheorem{lem}[thm]{Lemma}
\newtheorem{prop}[thm]{Proposition}

\theoremstyle{definition}
\newtheorem{defn}[thm]{Definition}

\theoremstyle{remark}
\newtheorem{rem}[thm]{Remark}

\usepackage{tikz}
\usetikzlibrary{arrows,shapes,trees}
\usepackage{graphicx}
\usepackage{hyperref}
\usepackage{amssymb}
\usepackage{amsfonts}
\usepackage{amsthm}
\usepackage{amsmath}
\usepackage{epstopdf}
\usepackage{mathrsfs}

\newcommand{\N}{\mathbb N}

\newcommand{\R}{\mathbb R}

\newcommand{\e}{\varepsilon}

\newcommand{\dist}{\mathrm{dist}}
\newcommand{\diam}{\mathrm{diam}}
\newcommand{\ls}{\lesssim}
\newcommand{\gs}{\gtrsim}

\newcommand{\la}{\lambda}

\newcommand{\p}{\partial}

\def\Xint#1{\mathchoice
{\XXint\displaystyle\textstyle{#1}}%
{\XXint\textstyle\scriptstyle{#1}}%
{\XXint\scriptstyle\scriptscriptstyle{#1}}%
{\XXint\scriptscriptstyle\scriptscriptstyle{#1}}%
\!\int}
\def\XXint#1#2#3{{\setbox0=\hbox{$#1{#2#3}{\int}$ }
\vcenter{\hbox{$#2#3$ }}\kern-.6\wd0}}

\def\dashint{\Xint-}

\title[Doubling and Poincar\'e inequalities for uniformized measures]{Doubling and Poincar\'e inequalities for uniformized measures on Gromov hyperbolic spaces}
\author{Clark Butler}

\begin{document}
\begin{abstract}
We generalize the recent results of Bj\"orn-Bj\"orn-Shanmugalingam \cite{BBS20} concerning how measures transform under the uniformization procedure of Bonk-Heinonen-Koskela for Gromov hyperbolic spaces \cite{BHK} by showing that these results also hold in the setting of uniformizing Gromov hyperbolic spaces by Busemann functions that we introduced in \cite{Bu20}. In particular uniformly local doubling and uniformly local Poincar\'e inequalities for the starting measure transform into global doubling and global Poincar\'e inequalities for the uniformized measure. We then show in the setting of uniformizations of universal covers of closed negatively curved Riemannian manifolds equipped with the Riemannian measure that one can obtain sharp ranges of exponents for the uniformized measure to be doubling and satisfy a $1$-Poincar\'e inequality. Lastly we introduce the procedure of uniform inversion for uniform metric spaces, and show that both the doubling property and the $p$-Poincar\'e inequality are preserved by uniform inversion for any $p \geq 1$. 
\end{abstract}

\maketitle

\section{Introduction}

Broadly speaking our work in this paper has three closely related objectives. The primary objective is to generalize the recent results of Bj\"orn-Bj\"orn-Shanmugalingam \cite{BBS20} concerning how measures transform under the uniformization procedure of Bonk-Heinonen-Koskela for Gromov hyperbolic spaces \cite{BHK}. We will consider how measures transform under the generalization of this uniformization procedure that we introduced in \cite{Bu20}. As in \cite{BBS20}, we will show that uniformizing these measures upgrades uniformly local doubling properties and uniformly local Poincar\'e inequalities to global doubling and global Poincar\'e inequalities for the uniformized space. Our results allow us to construct a number of interesting new unbounded metric measure spaces supporting Poincar\'e inequalities. A particularly important example is uniformizations of hyperbolic fillings of unbounded metric spaces, which play a key role in our followup work \cite{Bu23} concerning extension and trace theorems for Besov spaces on noncompact doubling metric measure spaces. 

Our second objective is to show that in the presence of a cocompact isometric discrete group action on the Gromov hyperbolic space we start with, it is often possible to apply the theorems of \cite{BBS20} to a much wider range of exponents than the ones considered there, leading in several cases to ranges that we can verify are sharp due to well-known results on Patterson-Sullivan measures in these contexts. In Theorem \ref{Riem doubling} we tie this threshold to the \emph{volume growth entropy} of universal covers of closed negatively curved Riemannian manifolds. In Remark \ref{renormalize} we briefly explain how Patterson-Sullivan measures on the Gromov boundary arise as renormalized limits of the uniformized measures considered here and in \cite{BBS20}. 

The final topic that we consider is procedures for transforming metric measure spaces that preserve the doubling property and $p$-Poincar\'e inequalities for a given $p \geq 1$. Oftentimes in analysis on metric spaces it is preferable to work on either a bounded or an unbounded space depending on the nature of the question under consideration. Thus there has been a significant amount of interest in procedures for passing back and forth between bounded and unbounded spaces while retaining as much information as possible. In the abstract metric space setting transformations between bounded and unbounded spaces can be realized through \emph{inversions} \cite{BHX08}, which generalize the classical notion of M\"obius inversions in the complex plane. It was shown by Li-Shanmugalingam \cite{LS15} that measures can be transformed under these inversions in such a way that a number of desirable properties can be preserved. However they were not able to obtain unconditional invariance of Poincar\'e inequalities under inversions, and they in fact showed that Poincar\'e inequalities cannot always be preserved \cite[Example 3.3.13]{LS15}. In the final section of this paper we introduce an alternative inversion operation based on uniformization that is specialized to uniform metric spaces. With the additional assistance of some results from \cite{BBS20} we will show that this operation preserves Poincar\'e inequalities. A more in-depth discussion of this is given in Section \ref{sec:inversion}.

We now introduce the setting of \cite{Bu20} and \cite{BBS20} in order to state our main theorems. For precise definitions regarding general notions in Gromov hyperbolic spaces we refer to Section \ref{sec:hyperbolic}, while for a more detailed treatment of the uniformization procedure discussed in this introduction we refer to Section \ref{sec:uniformize}. We begin with a proper geodesic $\delta$-hyperbolic metric space $(X,d)$, meaning that geodesic triangles in $X$ are $\delta$-thin for a constant $\delta \geq 0$. The \emph{Gromov boundary} $\p X$ of $X$ is the collection of all geodesic rays $\gamma:[0,\infty) \rightarrow X$ up to the equivalence relation that two geodesic rays are equivalent if they are at bounded distance from one another. For a given geodesic ray $\gamma:[0,\infty) \rightarrow X$, the \emph{Busemann function} $b_{\gamma}: X \rightarrow \R$ associated to $\gamma$ is defined by the limit
\begin{equation}\label{first busemann definition}
b_{\gamma}(x) = \lim_{t \rightarrow \infty} d(\gamma(t),x)-t. 
\end{equation} 
We then define
\begin{equation}\label{extension busemann definition}
\mathcal{B}(X) = \{b_{\gamma}+s: \text{$\gamma$ a geodesic ray in $X$, $s \in \R$}\},
\end{equation}
and refer to any function $b \in \mathcal{B}(X)$ as a Busemann function on $X$. See \cite[(1.4-1.5)]{Bu20} for further details. The Busemann functions $b \in \mathcal{B}(X)$ are all $1$-Lipschitz functions on $X$. For a Busemann function $b$ of the form $b = b_{\gamma} + s$ for some $s \in \R$, we define the endpoint $\omega \in \p X$ of $\gamma$ to be the \emph{basepoint} of $b$ and say that $b$ is \emph{based at $\omega$}.  

For $z \in X$ we define $b_{z}(x) = d(x,z)$ to be the distance from $z$. We augment the set of Busemann functions with the set of translates of distance functions on $X$, 
\begin{equation}\label{distance definition}
\mathcal{D}(X) = \{b_{z}+s :z\in X, s\in \R\}.
\end{equation}
For $b \in \mathcal{D}(X)$ with $b = b_{z}+s$ for some $z \in X$ and $s \in \R$ we then refer to $z$ as the \emph{basepoint} of $b$, in analogy to the case of Busemann functions. We write $\hat{\mathcal{B}}(X) = \mathcal{D}(X) \cup \mathcal{B}(X)$. Then all functions $b \in \hat{\mathcal{B}}(X)$ are $1$-Lipschitz. 

For each $b \in \hat{\mathcal{B}}(X)$ and each $\e > 0$ we define a positive density $\rho_{\e,b}$ on $X$ by 
\[
\rho_{\e,b}(x) = e^{-\e b(x)}.
\]
For a curve $\gamma$ in $X$ we let
\[
\ell_{\e,b}(\gamma) = \int_{\gamma} \rho_{\e,b}\,ds,
\]
denote the line integral of $\rho_{\e,b}$ along $\gamma$. We let $(X_{\e,b},d_{\e,b})$ denote the metric space obtained by conformally deforming $X$ by the density $\rho_{\e,b}$, i.e., defining the new distance $d_{\e,b}$ for $x,y \in X$ by
\begin{equation}\label{define distance}
d_{\e,b}(x,y) = \inf \ell_{\e,b}(\gamma),
\end{equation}
with the infimum taken over all curves $\gamma$ joining $x$ to $y$. The metric sapce $X_{\e,b}$ is bounded if and only if $b \in \mathcal{D}(X)$ \cite[Proposition 4.4]{Bu20}. 

Our main results concern the corresponding effect of this conformal deformation on measures on $X$. We will require the following key definition. For the entirety of this paper a \emph{metric measure space} is a triple $(X,d,\mu)$ consisting of a metric space $(X,d)$ equipped with a Borel measure $\mu$. 

\begin{defn}\label{defn:local doubling}
Let $(X,d,\mu)$ be a metric measure space and let $B_{X}(x,r)$ denote the open ball of radius $r > 0$ centered at $x \in X$. The measure $\mu$ is \emph{doubling} if there is a constant $C_{\mu} \geq 1$ such that for every $x \in X$ and $r > 0$ we have
\begin{equation}\label{define doubling measure}
\mu(B_{X}(x,2r)) \leq C_{\mu}\mu(B_{X}(x,r)).
\end{equation}
If the inequality \eqref{define doubling measure} only holds for balls of radius at most $R_{0}$ then we will say that $\mu$ is \emph{doubling on balls of radius at most $R_{0}$}.  We will alternatively say that $\mu$ is \emph{uniformly locally doubling} if there is an $R_{0} > 0$ such that $\mu$ is doubling on balls of radius at most $R_{0}$.
\end{defn}

We let $\mu$ be a given Borel measure on $X$ that is doubling on balls of radius at most $R_{0}$ with constant $C_{\mu}$. For each $\beta > 0$ we define a measure $\mu_{\beta,b}$ on $X$ by
\begin{equation}\label{define beta}
d\mu_{\beta,b}(x) = \rho_{\beta,b}(x) d\mu(x) = e^{-\beta b(x)} d\mu(x) ,
\end{equation}
for $x \in X$. We will consider $\mu_{\beta,b}$ as a measure on $X_{\e,b}$ and extend it to the completion $\bar{X}_{\e,b}$ by setting $\mu_{\beta,b}(\p X_{\e,b}) = 0$, where $\p X_{\e,b} = \bar{X}_{\e,b} \backslash X_{\e,b}$ denotes the complement of $X_{\e,b}$ inside its completion. Our first theorem shows that there is a threshold $\beta_{0}$ depending only on $R_{0}$ and $C_{\mu}$ such that if $\beta \geq \beta_{0}$ then the uniformly locally doubling measure $\mu$ on $X$ transforms into a measure $\mu_{\beta,b}$ on $\bar{X}_{\e,b}$ that is doubling at all scales. 

Our theorem requires two additional hypotheses on $X$ and the density $\rho_{\e,b}$, which we briefly summarize here. We recall that we are assuming that $X$ is a proper geodesic $\delta$-hyperbolic space and that $b \in \hat{\mathcal{B}}(X)$. The first is that $X$ is \emph{$K$-roughly starlike} from the basepoint $\omega \in X \cup \p X$ of the chosen function  $b \in \hat{\mathcal{B}}(X)$ for a given constant $K \geq 0$. Roughly speaking this condition requires that each point $x \in X$ lies within distance $K$ of a geodesic ray or line starting from $\omega$. We defer a precise definition to Section \ref{sec:hyperbolic} as one must distinguish the cases $\omega \in X$ and $\omega \in \p X$. 

The second requirement is that $\rho_{\e,b}$ is a \emph{Gehring-Hayman density} for $X$ with constant $M \geq 1$ (abbreviated as \emph{GH-density}). This means that for each $x,y \in X$ and each \emph{geodesic} $\gamma$ joining $x$ to $y$ we have
\begin{equation}\label{GH inequality}
d_{\e,b}(x,y) \leq M\ell_{\e,b}(\gamma).
\end{equation}
In other words, $\rho_{\e,b}$ is a GH-density  if geodesics in $X$ minimize distance in $X_{\e,b}$ up to a universal multiplicative constant. This requirement is not as stringent as it first appears; by the work of Bonk-Heinonen-Koskela \cite[Theorem 5.1]{BHK} there is an $\e_{0} = \e_{0}(\delta) > 0$ depending only on $\delta$ such that $\rho_{\e,b}$ is a GH-density with constant $M = 20$ for any $b \in \hat{\mathcal{B}}(X)$ and any $0 < \e \leq \e_{0}$. For CAT$(-1)$ spaces one may use a threshold $\e_{0} = 1$ instead \cite[Theorem 1.10]{Bu20}.  The rough starlikeness hypothesis and the GH-density hypothesis together guarantee that the conformal deformation $X_{\e,b}$ has a number of nice properties by our previous results in \cite{Bu20} that are summarized in Section \ref{sec:uniformize} and are used heavily in the proofs of our theorems. 

We then have the following theorem; below ``the data" refers to the collection of parameters $\delta$, $K$, $\e$, $M$, $\beta$, $R_{0}$, and $C_{\mu}$.

\begin{thm}\label{doubling threshold}
There is $\beta_{0} = \beta_{0}(R_{0},C_{\mu}) > 0$ such that if $\beta \geq \beta_{0}$ then the measure $\mu_{\beta,b}$ on $\bar{X}_{\e,b}$ is doubling with constant $C_{\mu_{\beta}}$ depending only on the data. 
\end{thm}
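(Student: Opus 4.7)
The plan is to reduce the doubling inequality for $\mu_{\beta,b}$ on $\bar{X}_{\e,b}$ to the uniformly local doubling of $\mu$ on $X$, with the weight $e^{-\beta b}$ playing the decisive role at large scales. The first step is to invoke the ball correspondence for uniformized Gromov hyperbolic spaces from \cite{Bu20}: for $x \in X$ and $r$ small relative to $\e^{-1}e^{-\e b(x)}$, the Gehring-Hayman hypothesis and rough starlikeness guarantee that $B_{\e,b}(x,r)$ is comparable to a Euclidean-like ball $B_X(x,h)$ with $h \asymp r\,e^{\e b(x)}$, and the conformal factor $e^{-\e b}$ is essentially constant on this ball. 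In this ``interior'' regime one obtains
\[
\mu_{\beta,b}(B_{\e,b}(x,r)) \asymp e^{-\beta b(x)} \mu(B_X(x,h)),
\]
so as long as $h \leq R_0$ the doubling inequality for $\mu_{\beta,b}$ at scale $r$ is an immediate consequence of uniform local doubling of $\mu$ with constant $C_\mu$, up to the geometric fudge factors depending on the data.

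The main case is when the associated $X$-radius exceeds $R_0$ or when $B_{\e,b}(x,r)$ reaches the boundary $\p X_{\e,b}$. My plan is to decompose $B_{\e,b}(x,2r)$ into annular shells indexed by the level sets of $b$, say $A_k = \{y \in B_{\e,b}(x,2r) : b(y) - b(x) \in [k,k+1)\}$ for $k \in \Z$. On $A_k$ the weight $e^{-\beta b}$ is comparable to $e^{-\beta(b(x)+k)}$, while the $\mu$-measure of $A_k$ is controlled by iterating local doubling of $\mu$: from doubling at scales $\leq R_0$ one extracts a global polynomial-type bound of the form $\mu(B_X(x,R)) \leq C_\mu^{1+R/R_0} \mu(B_X(x,R_0))$, which in the hyperbolic setting (translated through the uniformization) produces an estimate $\mu(A_k)\cdot e^{-\beta b(x)} \lesssim e^{(\lambda-\beta)k} \mu_{\beta,b}(B_{\e,b}(x,r))$ with $\lambda = \lambda(R_0,C_\mu) \asymp (\log C_\mu)/R_0$. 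Summing the geometric series in $k$ converges precisely when $\beta > \lambda$, which identifies the threshold $\beta_0 = \beta_0(R_0, C_\mu)$ and yields the doubling constant $C_{\mu_\beta}$ depending only on the data.

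For balls centered at a boundary point $\xi \in \p X_{\e,b}$, I would select an interior approximant $x_0 \in X$ with $d_{\e,b}(\xi, x_0) \asymp r/4$ and observe that $B_{\e,b}(\xi, r/2) \subset B_{\e,b}(x_0, r) \subset B_{\e,b}(\xi, 2r)$. Since $\mu_{\beta,b}(\p X_{\e,b}) = 0$ by convention, the boundary doubling reduces to the interior estimate for $x_0$ with a bounded loss in constants.

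The principal obstacle is to organize the annular decomposition so that the shells $A_k$ genuinely sit inside the expected $X$-regions predicted by the uniform-space geometry, and to verify that the constants produced by the Gehring-Hayman inequality \eqref{GH inequality} and rough starlikeness intervene only in the final doubling constant rather than in the threshold $\beta_0$. The delicate accounting is to confirm that the exponent $\lambda$ governing volume growth depends only on $R_0$ and $C_\mu$, so that the balance $\beta_0 > \lambda$ gives exactly the claimed dependence of $\beta_0$ on $(R_0,C_\mu)$ alone; the parameters $\delta, K, \e, M$ are then absorbed into the multiplicative constant $C_{\mu_\beta}$.
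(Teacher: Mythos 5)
Your proposal is correct and follows essentially the same route as the paper: the paper likewise handles subWhitney balls by the ball correspondence (Lemma \ref{sub inclusion} and Lemma \ref{sub measure}), reduces general and boundary-centered balls to a single mass bound at a Whitney point (Proposition \ref{global doubling}), and proves that bound by decomposing the ball into annuli $A_n$ along level sets of $d_{\e}$ (equivalently of $b$), covering each annulus by $\lesssim e^{\alpha n}$ balls of radius $R_{0}$ via the iterated local doubling estimate of Lemma \ref{enlarge doubling refined}, and summing a geometric series that converges once $\beta$ exceeds a multiple of $\alpha = \alpha(R_{0},C_{\mu})$. The only cosmetic difference is that the paper's exponent threshold is $3\alpha$ rather than $\alpha$, because both the number of covering balls and the measure ratio of each covering ball to the reference ball contribute a factor $e^{\alpha n}$ — a detail your $\lambda$ would need to absorb but which does not affect the claimed dependence of $\beta_{0}$ on $(R_{0},C_{\mu})$.
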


This theorem generalizes the the main result of Bj\"orn-Bj\"orn-Shanmugalingam \cite[Theorem 1.1]{BBS20} to the setting in which the uniformization $X_{\e,b}$ is potentially unbounded, i.e., the case $b \in \mathcal{B}(X)$. The case $b \in \mathcal{D}(X)$ is more or less already contained in \cite[Theorem 1.1]{BBS20}, with the exception that they only consider the original parameter range $0 < \e \leq \e_{0}$ of Bonk-Heinonen-Koskela. An explicit value $\beta_{0} = \frac{17 \log C_{\mu}}{3R_{0}}$ is given in  \cite[Theorem 1.1]{BBS20}; a similar explicit estimate for $\beta_{0}$ can be extracted from our proof. In the process of proving Theorem \ref{doubling threshold} we formulate a useful criterion (Proposition \ref{global doubling}) for checking that $\mu_{\beta,b}$ is doubling on $\bar{X}_{\e,b}$. This criterion will be used to verify Theorem \ref{Riem doubling} below as well as some key claims in our followup work \cite{Bu23}.




The doubling property for $\mu_{\beta,b}$ on $\bar{X}_{\e,b}$ is the key property needed to transform uniformly local $p$-Poincar\'e inequalities on $X$ into global $p$-Poincar\'e inequalities on $X_{\e,b}$. The following theorem makes this claim precise. We refer to Section \ref{sec:poincare} for the precise definitions of uniformly local $p$-Poincar\'e inequalities and global $p$-Poincar\'e inequalities. We retain the same hypotheses regarding rough starlikeness and the GH-density property that we assumed in Theorem \ref{doubling threshold}. Below ``the data" refers to the parameters $\delta$, $K$, $\e$, $M$, $\beta$, the doubling constant $C_{\mu_{\beta,b}}$ for $\mu_{\beta,b}$ on $\bar{X}_{\e,b}$, the power $p$, the radius and the constants $C_{\mathrm{PI}}$ and $\la$ appearing in the uniformly local $p$-Poincar\'e inequality \eqref{uniformly local Poincare} as well as the local doubling radius and constant $C_{\mu}$ for $\mu$. 

\begin{thm}\label{global Poincare}
Suppose that the metric measure space $(X,d,\mu)$ is uniformly locally doubling and supports a uniformly local $p$-Poincar\'e inequality for some $p \geq 1$. Suppose further that for a given $\beta > 0$ we have that $\mu_{\beta,b}$ is doubling on $\bar{X}_{\e,b}$ with constant $C_{\mu_{\beta}}$. Then the metric measure spaces $(X_{\e,b},d_{\e,b},\mu_{\beta,b})$ and $(\bar{X}_{\e,b},d_{\e,b},\mu_{\beta,b})$ each support a $p$-Poincar\'e inequality with constant $C_{\mathrm{PI}}^{*}$ depending only on the data. 
\end{thm}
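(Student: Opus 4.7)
The plan is to follow the strategy of Bj\"orn-Bj\"orn-Shanmugalingam \cite{BBS20}, adapting each step to the unbounded setting where $b \in \mathcal{B}(X)$. The global $p$-Poincar\'e inequality on $(X_{\e,b}, d_{\e,b}, \mu_{\beta,b})$ should be established ball-by-ball, with a dichotomy based on whether a ball $B_{X_{\e,b}}(x_0, r)$ is \emph{small} (so that $\rho_{\e,b}$ is comparable to a constant on a neighborhood of $B$) or \emph{large} (so that the density varies appreciably across $B$).

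The first step I would carry out is to establish precise comparability between balls in $X_{\e,b}$ and balls in $X$, using the $1$-Lipschitz property of $b$, rough starlikeness from $\omega$, and the GH-density hypothesis. Concretely, for $x \in X_{\e,b}$ the $d_{\e,b}$-ball $B_{X_{\e,b}}(x,r)$ should sit inside a $d$-ball in $X$ whose radius is essentially $r e^{\e b(x)}/\e$ when $r$ is not too close to the scale $e^{-\e b(x)}/\e$, modulo a translate whose size is controlled by the data. These comparisons are already packaged in the results of Section \ref{sec:uniformize}. Using them, in the small-ball regime ($r \lesssim e^{-\e b(x_0)}$) the density $\rho_{\e,b}$ oscillates by only a bounded factor on a slightly enlarged ball, so $d_{\e,b}$ and $\mu_{\beta,b}$ reduce to rescaled-and-weighted copies of $d$ and $\mu$. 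Choosing the constants so that the corresponding radius in $X$ falls below the uniformly local PI radius, the hypothesized local $p$-Poincar\'e inequality for $\mu$ on $X$ transfers directly and gives the desired bound with constants controlled by the data.

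The second step handles large balls via a telescoping chain argument in the Hajlasz--Koskela style. For $x,y \in B_{X_{\e,b}}(x_0,r)$ I would connect them by a geodesic in $X$ (whose $d_{\e,b}$-length is controlled by the GH-density property), decompose it into Whitney-type pieces each of which is a small ball in the sense above, apply the reduced small-ball Poincar\'e inequality to each piece with any upper gradient pair $(u,g)$, and then sum. The doubling property of $\mu_{\beta,b}$ on $\bar X_{\e,b}$ -- which is the standing hypothesis of the theorem, supplied by Theorem \ref{doubling threshold} in applications -- is exactly what is needed to control the overlap and cardinality of such chains and to absorb the tail terms in the telescoping sum. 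Once the PI holds on $X_{\e,b}$ with the claimed constants, the statement for $\bar X_{\e,b}$ follows from $\mu_{\beta,b}(\p X_{\e,b})=0$ and the density of $X_{\e,b}$ in $\bar X_{\e,b}$ by a standard approximation of upper-gradient pairs.

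The main obstacle I expect is the chaining step in the Busemann case $b \in \mathcal{B}(X)$: near the basepoint $\omega \in \p X$ the density $\rho_{\e,b}$ blows up, so paths that are long in $X$ become short in $X_{\e,b}$, and the boundary at infinity of $X_{\e,b}$ is genuinely present. One must produce a Whitney-type decomposition of an arbitrary $d_{\e,b}$-ball by small pieces that remains simultaneously compatible with the global doubling of $\mu_{\beta,b}$ and the local PI radius of $\mu$, uniformly in how deeply the ball reaches toward $\omega$. The case $b \in \mathcal{D}(X)$ of \cite{BBS20} avoids this because $X_{\e,b}$ is bounded there; overcoming it is essentially the extension beyond \cite[Theorem 4.4]{BBS20} required for the present theorem.
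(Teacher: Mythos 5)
Your proposal follows essentially the same route as the paper: transfer the uniformly local $p$-Poincar\'e inequality from $X$ to small subWhitney balls of $X_{\e,b}$ (where $\rho_{\e,b}$ is roughly constant and the corresponding $X$-ball has radius below $R_{0}$), upgrade to all balls by chaining along uniform curves using the doubling of $\mu_{\beta,b}$, and pass to $\bar{X}_{\e,b}$ via $\mu_{\beta,b}(\p X_{\e,b})=0$. The only difference is that the paper outsources your second step to \cite[Proposition 6.3]{BBS20}, whose chaining proof applies verbatim to unbounded uniform spaces; the obstacle you flag near $\omega$ is already dispatched by the fact (from \cite{Bu20}) that $X_{\e,b}$ is $A$-uniform and geodesics avoiding $\omega$ have finite $d_{\e,b}$-length, so the Whitney decomposition along a uniform curve works uniformly regardless of how close the ball comes to the boundary.
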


By combining Theorems \ref{doubling threshold} and \ref{global Poincare}, we see that if we assume $(X,d,\mu)$ is uniformly locally doubling and supports a uniformly local $p$-Poincar\'e inequality then for $\beta \geq \beta_{0}$ we always have that $(X_{\e,b},d_{\e,b},\mu_{\beta,b})$ is doubling and supports a $p$-Poincar\'e inequality, and the same is true with $\bar{X}_{\e,b}$ replacing $X_{\e,b}$. For $b \in \mathcal{D}(X)$ Theorem \ref{global Poincare} essentially follows directly from \cite[Theorem 1.1]{BBS20} and its proof. For Busemann functions $b \in \mathcal{B}(X)$ our uniformization construction in \cite{Bu20} is designed such that minimal modifications to the proofs in \cite{BBS20} are required. We emphasize that Theorem \ref{global Poincare} does not require us to restrict to the range $\beta \geq \beta_{0}$ considered in Theorem \ref{global doubling}; it only requires that $\mu_{\beta,b}$ is doubling on $\bar{X}_{\e,b}$. 

As indicated previously, when $X$ comes equipped with a cocompact discrete isometric group action it is possible to significantly improve Theorem \ref{doubling threshold} by obtaining a better, often sharp threshold $\beta_{0}$ for $\mu_{\beta,b}$ to be doubling. This is the content of Theorem \ref{crit doubling} in Section \ref{sec:doubling}. We highlight here an interesting corollary of this theorem that illustrates the power of this method. 

We consider a complete simply connected $n$-dimensional Riemannian manifold $X$ with sectional curvatures $\leq -1$ together with a cocompact discrete isometric action of a group $\Gamma$ on $X$. We let $\mu$ denote the Riemannian volume on $X$, which is $\Gamma$-invariant. The \emph{volume growth entropy} of $X$ is given by the limit for any $x \in X$, 
\begin{equation}\label{volume growth}
h_{X} = \lim_{R \rightarrow \infty} \frac{\log \mu(B_{X}(x,R))}{R}.
\end{equation}
For the existence of this limit see \cite{Man}. The quantity $h_{X}$ shows up in many places, for instance it is also equal to the topological entropy of the geodesic flow on the unit tangent bundle of the quotient of $X$ by $\Gamma$ \cite{Man}. The constants in Theorem \ref{Riem doubling} are uniform in the sense that they do not depend on the choice of function $b \in \hat{\mathcal{B}}(X)$.

\begin{thm}\label{Riem doubling}
For each $\beta > h_{X}$ the metric measure spaces $(X_{1,b},d_{1,b},\mu_{\beta,b})$ and $(\bar{X}_{1,b},d_{1,b},\mu_{\beta,b})$ for $b \in \hat{\mathcal{B}}(X)$ are doubling and support a $1$-Poincar\'e inequality with uniform constants. 
\end{thm}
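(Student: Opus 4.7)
My plan is to verify the hypotheses of Theorems \ref{doubling threshold} and \ref{global Poincare} in this Riemannian setting, using the cocompact $\Gamma$-action together with the volume entropy bound to extract the sharp threshold $\beta > h_X$. First I would check the structural hypotheses. Since the sectional curvatures of $X$ are bounded above by $-1$, $X$ is CAT$(-1)$ and hence $\delta$-hyperbolic, and the CAT$(-1)$ statement from \cite[Theorem 1.10]{Bu20} cited in the introduction guarantees that $\rho_{1,b}$ is a GH-density with universal constant $M = 20$ for every $b \in \hat{\mathcal{B}}(X)$; this is what makes $\e = 1$ admissible. Cocompactness of $\Gamma$ yields rough starlikeness from any basepoint $\omega \in X \cup \p X$ with a single constant $K$ depending only on the diameter of a fundamental domain: any point of $X$ can be translated by $\Gamma$ into a compact set that meets a geodesic to $\omega$. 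Finally, because $\Gamma \backslash X$ is a closed Riemannian manifold and $\mu$ is $\Gamma$-invariant, $\mu$ is uniformly locally doubling on $X$ and supports a uniformly local $1$-Poincar\'e inequality, with constants depending only on the geometry of the quotient.

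The heart of the argument is to show that $\mu_{\beta,b}$ is doubling on $\bar{X}_{1,b}$ whenever $\beta > h_{X}$, with constants depending only on $\beta - h_{X}$ and on the geometry of $\Gamma \backslash X$. My plan is to invoke the cocompact doubling criterion Theorem \ref{crit doubling} of Section \ref{sec:doubling}, whose hypothesis is an entropy-type bound on weighted $\mu$-volumes. The required input is the uniform growth estimate that follows from \eqref{volume growth} together with cocompactness: for each $\eta > 0$ there exists $C_{\eta}$ such that $\mu(B_{X}(x,R)) \leq C_{\eta} e^{(h_{X} + \eta) R}$ for all $x \in X$ and $R \geq 0$. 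Choosing $\eta$ so that $\beta > h_{X} + \eta$ and decomposing the $\mu_{\beta,b}$-mass of the $X$-region corresponding to a ball in $X_{1,b}$ into horospherical shells $\{n \leq b(\cdot) < n+1\}$ on which $e^{-\beta b}$ is essentially constant, the entropy bound on the $\mu$-mass of each shell combines with the geometric estimates from Section \ref{sec:uniformize} relating balls in $X$ to balls in $X_{1,b}$ through the GH-density to verify the criterion of Proposition \ref{global doubling}.

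Once $\mu_{\beta,b}$ is doubling on $\bar{X}_{1,b}$ with uniform constants, Theorem \ref{global Poincare} at $p = 1$ immediately upgrades the uniformly local $1$-Poincar\'e inequality on $X$ to a global $1$-Poincar\'e inequality on both $(X_{1,b}, d_{1,b}, \mu_{\beta,b})$ and $(\bar{X}_{1,b}, d_{1,b}, \mu_{\beta,b})$. Uniformity in $b$ is then automatic, since every constant entering the argument depends only on the intrinsic geometry of $\Gamma \backslash X$ and on $\beta - h_{X}$, not on the choice of basepoint $\omega$. The principal obstacle of the proof therefore lies entirely in the previous paragraph: a direct application of Theorem \ref{doubling threshold} would only yield doubling for $\beta$ above a quantity controlled by the local doubling constant $C_{\mu}$, which is generically far larger than $h_{X}$, so reaching the sharp threshold $h_{X}$ requires genuinely exploiting the cocompact $\Gamma$-action to govern long-range weighted volumes through the entropy rather than through iteration of the local doubling constant.
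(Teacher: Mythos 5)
Your proposal is correct and follows essentially the same route as the paper: verify the structural hypotheses (the CAT$(-1)$ bound gives the GH-density at $\e=1$ via \cite[Theorem 1.10]{Bu20}, and local doubling plus the local $1$-Poincar\'e inequality come from the closed quotient), then apply the cocompact doubling criterion of Proposition \ref{crit doubling} --- whose shell decomposition and entropy input you accurately re-derive --- to get doubling for every $\beta > h_{X}$, and finally invoke Theorem \ref{global Poincare}. The only cosmetic difference is that the paper obtains rough starlikeness with $K=0$ directly from geodesic extendability in a Hadamard manifold, rather than from cocompactness as you suggest.
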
 

In Remark \ref{renormalize} we explain why the threshold $h_{X}$ is sharp. The constants in Theorem \ref{Riem doubling} are uniform in the sense that they do not depend on the choice of $b \in \hat{\mathcal{B}}(X)$, although they will depend on the choice of exponent $\beta > h_{X}$.

Our results regarding preservation of Poincar\'e inequalities for uniform metric spaces are proved in Section \ref{sec:inversion}. These results follow formally by combining Theorems \ref{doubling threshold} and \ref{global Poincare} above with results in \cite{Bu21} and \cite{BBS20}; since the initial setup is quite different from that of our theorems above we have isolated the discussion of those results to Section \ref{sec:inversion}. The rest of the paper is structured as follows: in Section \ref{sec:uniformize} we review some results from our previous work \cite{Bu20} regarding uniformizing Gromov hyperbolic spaces and extend some results from \cite{BBS20} to the setting of uniformizing by Busemann functions. In Section \ref{sec:doubling} we analyze the doubling properties of the uniformized measure \eqref{define beta} and prove Theorem \ref{doubling threshold}. Lastly in Section \ref{sec:poincare} we prove Theorems \ref{global Poincare} and \ref{Riem doubling}. 

\section{Uniformization}\label{sec:uniformize}

\subsection{Definitions}\label{subsec:defn} Let $X$ be a set and let $f$, $g$ be real-valued functions defined on $X$. For $c \geq 0$ we will write $f \doteq_{c} g$ if
\[
|f(x)-g(x)| \leq c,
\] 
for all $x \in X$. If the exact value of the constant $c$ is not important or implied by context we will often just write $f \doteq g$. The relation $f \doteq g$ will sometimes be referred to as a \emph{rough equality} between $f$ and $g$. Similarly for $C \geq 1$ and functions $f,g:X \rightarrow (0,\infty)$, we will write $f \asymp_{C} g$ if for all $x \in X$, 
\[
C^{-1}g(x) \leq f(x) \leq C g(x).
\]
We will write $f \asymp g$ if the value of $C$ is implied by context. We will write $f \ls_{C} g$ if $f(x) \leq Cg(x)$ for all $x \in X$ and $f \gs_{C} g$ if $f(x) \geq C^{-1}g(x)$ for $x \in X$. Thus $f \asymp_{C} g$ if and only if $f \ls_{C} g$ and $f \gs_{C} g$. As with the other notation, we will drop the constant $C$ and just write $f \ls g$ or $f \gs g$ if the value of $C$ is implied by context. We will generally stick to the convention of using $c \geq 0$ for additive constants and $C \geq 1$ for multiplicative constants. To indicate on what parameters -- such as $\delta$ -- the constants depend on we will write $c = c(\delta)$, etc. At the beginning of each section we will indicate on what parameters the implied constants of the inequalities $\ls$ and $\gs$, the comparisons $\asymp$, and the rough equalities $\doteq$ are allowed to depend. We will often reiterate these conditions for emphasis. 

For a metric space $(X,d)$ we will write $B_{X}(x,r) = \{y \in X:d(x,y) < r\}$ for the open ball of radius $r > 0$ centered at a point $x \in X$. We write $\bar{B}_{X}(x,r) = \{y \in X:d(x,y) \leq r\}$ for the closed ball of radius $r > 0$ centered at $x$. We note that the inclusion $\overline{B_{X}(x,r)} \subset \bar{B}_{X}(x,r)$ of the closure of the open ball into the closed ball can be strict in general. By convention all balls $B \subset X$ are considered to have a fixed center and radius, even though it may be the case that we have $B_{X}(x,r) = B_{X}(x',r')$ as sets for some $x \neq x'$, $r \neq r'$. All balls $B \subset X$ are also considered to be open balls unless otherwise specified.  We will write $r(B)$ for the radius of a ball $B$. For a ball $B = B_{X}(x,r)$ in $X$ and a constant $c > 0$ we write $cB = B_{X}(x,cr)$ for the corresponding ball with radius scaled by $c$. For a subset $E \subset  X$ we write $\diam(E) = \sup\{d(x,y):x,y \in E\}$ for the diameter of $E$ and write $\dist(x,E) = \inf\{d(x,y):y \in E\}$ for the infimal distance of a point $x \in X$ to $E$. 

Let $f:(X,d) \rightarrow (X',d')$ be a map between metric spaces. We say that $f$ is \emph{isometric} if $d'(f(x),f(y)) = d(x,y)$ for $x$, $y\in X$. We recall that a curve $\gamma: I \rightarrow X$ is a \emph{geodesic} if it is an isometric mapping of the interval $I \subset \R$ into $X$.  We say that $X$ is \emph{geodesic} if any two points in $X$ can be joined by a geodesic. A \emph{geodesic triangle} $\Delta$ in $X$ consists of three points $x,y,z \in X$ together with geodesics joining these points to one another. Writing $\Delta = \gamma_{1} \cup \gamma_{2} \cup \gamma_{3}$ as a union of its edges, we say that $\Delta$ is \emph{$\delta$-thin} for a given $\delta \geq 0$ if for each point $p \in \gamma_{i}$, $i =1,2,3$, there is a point $q \in \gamma_{j}$ with $d(p,q) \leq \delta$ and $i \neq j$. A geodesic metric space $X$ is \emph{Gromov hyperbolic} if there is a $\delta \geq 0$ such that all geodesic triangles in $X$ are $\delta$-thin; in this case we will also say that $X$ is \emph{$\delta$-hyperbolic}. When considering Gromov hyperbolic spaces $X$ we will usually use the generic distance notation $|xy|:=d(x,y)$ for the distance between $x$ and $y$ in $X$ and the generic notation $xy$ for a geodesic connecting two points $x,y \in X$, even when this geodesic is not unique.

A metric space $(X,d)$ is \emph{proper} if its closed balls are compact. The Gromov boundary $\p X$ of a proper geodesic $\delta$-hyperbolic space $X$ is defined to be the collection of all geodesic rays $\gamma: [0,\infty) \rightarrow X$ up to the equivalence relation of two rays being equivalent if they are at a bounded distance from one another. We will often refer to the point $\omega \in \p X$ corresponding to a geodesic ray $\gamma$ as the \emph{endpoint} of $\gamma$. Using the Arzela-Ascoli theorem it is easy to see in a proper geodesic $\delta$-hyperbolic space that for any points $x,y \in X \cup \p X$ there is a geodesic $\gamma$ joining $x$ to $y$. We will continue to write $xy$ for any such choice of geodesic joining $x$ to $y$. We will allow our geodesic triangles $\Delta$ to have vertices on $\p X$, in which case we will still write $\Delta = xyz$ if $\Delta$ has vertices $x,y,z$. 


As in our previous work \cite{Bu20}, we will use the notation $\p X$ for the Gromov boundary of $X$ even though it conflicts with the notation $\p \Omega = \bar{\Omega}\backslash \Omega$ for the metric boundary of a metric space $(\Omega,d)$ inside its completion $\bar{\Omega}$. Since we always assume that $X$ is proper we will always have $\bar{X} = X$, so the metric boundary of $X$ will always be trivial. Thus there will be no ambiguity in using $\p X$ for the Gromov boundary as well.


For $x,y,z \in X$ the \emph{Gromov product} of $x$ and $y$ based at $z$ is defined by
\begin{equation}\label{Gromov product}
(x|y)_{z} = \frac{1}{2}(|xz|+|yz|-|xy|). 
\end{equation} 
We can also take the basepoint of the Gromov product to be any function $b \in \hat{\mathcal{B}}(X)$. For $b \in \hat{\mathcal{B}}(X)$ the Gromov product based at $b$ is defined by 
\begin{equation}\label{Gromov Busemann product}
(x|y)_{b} = \frac{1}{2}(b(x) + b(y) - |xy|). 
\end{equation}
For $b \in \mathcal{D}(X)$, $b(x) = d(x,z)+s$ this reduces to the notion of Gromov product in \eqref{Gromov product}, as we have $(x|y)_{b} = (x|y)_{z} + s$.

We now consider an incomplete metric space $(\Omega,d)$ and write $\p \Omega = \bar{\Omega}\backslash \Omega$ for the metric boundary of $\Omega$ in its completion $\bar{\Omega}$. We write $d_{\Omega}(x) :=\dist(x,\p \Omega)$ for the distance of a point $x \in \Omega$ to the boundary $\p \Omega$. An important observation that we will use without comment is that $d_{\Omega}$ defines a $1$-Lipschitz function on $\Omega$, i.e., for $x,y \in \Omega$ we have 
\[
|d_{\Omega}(x)-d_{\Omega}(y)| \leq d(x,y). 
\]
For a curve $\gamma: I \rightarrow \Omega$ we write $\ell(\gamma)$ for the length of $\gamma$ and say that $\gamma$ is \emph{rectifiable} if $\ell(\gamma) < \infty$. For an interval $I \subset \R$ and $t \in I$ we write $I_{\leq t} = \{s \in I: s\leq t\}$ and $I_{\geq t} = \{s \in I: s\geq t\}$. For a rectifiable curve $\gamma:I \rightarrow \Omega$ we write $\gamma_{-},\gamma_{+} \in \bar{\Omega}$ for the endpoints of $\gamma$; writing $t_{-} \in [-\infty,\infty)$ and $t_{+} \in (-\infty,\infty]$ for the endpoints of $I$, these are defined by the limits $\gamma(t_{-}) = \lim_{t \rightarrow t_{-}} \gamma(t)$ and $\gamma(t_{+}) = \lim_{t \rightarrow t_{+}} \gamma(t)$ in $\bar{\Omega}$ which exist because $\ell(\gamma) < \infty$. 

\begin{defn}\label{def:uniform}For a constant $A \geq 1$ and an interval $I \subset \R$, a curve $\gamma: I \rightarrow \Omega$ is \emph{$A$-uniform} if 
\begin{equation}\label{uniform one}
\ell(\gamma) \leq Ad(\gamma_{-},\gamma_{+}),
\end{equation}
and if for every $t \in I$ we have
\begin{equation}\label{uniform two}
\min\{\ell(\gamma|_{I_{\leq t}}),\ell(\gamma|_{I_{\geq t}})\} \leq A d_{\Omega}(\gamma(t)). 
\end{equation}
The metric space $\Omega$ is \emph{$A$-uniform} if it is locally compact and if any two points in $\Omega$ can be joined by an $A$-uniform curve. 
\end{defn}

We extend Definition \eqref{def:uniform} to the case of non-rectifiable curves $\gamma: I \rightarrow \Omega$ by replacing \eqref{uniform one} with the condition that $d(\gamma(s),\gamma(t)) \rightarrow \infty$ as $s \rightarrow t_{-}$ and $t \rightarrow t_{+}$. We keep the requirement \eqref{uniform two} the same. Observe that with this extended definition the inequality \eqref{uniform two} implies that an $A$-uniform curve $\gamma$ is always \emph{locally rectifiable}, meaning that each compact subcurve of $\gamma$ is rectifiable. We note that it is easily verified from the definitions that the property of a curve $\gamma$ being $A$-uniform is independent of the choice of parametrization of $\gamma$.

Now let $X$ be a proper geodesic $\delta$-hyperbolic space. We define $X$ to be \emph{$K$-roughly starlike} from a point $z \in X$ if for each $x \in X$ there is a geodesic ray $\gamma:[0,\infty) \rightarrow X$ such that $\dist(x,\gamma) \leq K$. Similarly for $\omega \in \p X$ we define $X$ to be $K$-roughly starlike from $\omega$ if for each $x \in X$ there is a geodesic line $\gamma: \R \rightarrow X$ with $\gamma|_{(-\infty,0]} \in \omega$ and $\dist(x,\gamma) \leq K$. When $\p X$ contains at least two points $K$-rough starlikeness from any point $x \in X \cup \p X$ implies $K'$-rough starlikeness from \emph{all} points of $X \cup \p X$ for a constant $K' \geq 0$ by \cite[Proposition 1.13]{Bu21}. We also note that rough starlikeness from $\omega$ immediately implies that $\p_{\omega} X \neq \emptyset$. 

We fix a function $b \in \hat{\mathcal{B}}(X)$ with basepoint $\omega \in X \cup \p X$ and let $\e > 0$ be such that the density $\rho_{\e}(x) = e^{- \e b(x)}$ is a GH-density on $X$ with constant $M$. Since $b$ is $1$-Lipschitz we have the \emph{Harnack type inequality} for $x,y \in X$,
\begin{equation}\label{Harnack}
e^{-\e|xy|} \leq \frac{\rho_{\e}(x)}{\rho_{\e}(y)} \leq e^{\e|xy|}.
\end{equation}


We write $X_{\e} = X_{\e,b}$ for the conformal deformation of $X$ with conformal factor $\rho$ and write $d_{\e} = d_{\e,b}$ for the resulting distance on $X_{\e}$. We write $\ell_{\e}(\gamma) :=\ell_{\e,b}(\gamma)$ for the lengths of curves measured in the metric $d_{\e}$ and $\ell(\gamma)$ for the lengths of curves measured in $X$. The properness of $X$ implies that $X_{\e}$ is locally compact.  By \cite[Theorem 1.4]{Bu20} the metric space $X_{\e}$ is incomplete and geodesics in $X$ are $A$-uniform curves in $X_{\e}$.  In particular the metric space $(X_{\e},d_{\e})$ is $A$-uniform. Furthermore the space $X_{\e}$ is bounded if and only if $b \in \mathcal{D}(X)$. The proof of \cite[Theorem 1.4]{Bu20} shows that when $b \in \mathcal{D}(X)$ all geodesics in $X$ have finite length in $X_{\e}$, while in the case $b \in \mathcal{B}(X)$ geodesics have finite length if and only if they do not have the basepoint $\omega$ of $b$ as an endpoint. For $x \in \bar{X}_{\e}$ we write $B_{\e}(x,r)$ for the open ball of radius $r > 0$ centered at $x$ in the metric $d_{\e}$ on $\bar{X}_{\e}$, and for $x \in X$ we write $B_{X}(x,r)$ for the open ball of radius $r$ centered at $x$ in $X$. 

For $x \in X_{\e}$ write $d_{\e}(x) = d_{X_{\e}}(x)$ for the distance to the metric boundary $\p X_{\e}$ of $X_{\e}$. By \cite[Theorem 1.6]{Bu20} there is a canonical identification $\varphi_{\e}: \p_{\omega} X \rightarrow \p X_{\e}$ of the Gromov boundary of $X$ relative to $\omega$ and the metric boundary $\p X_{\e}$ of $X_{\e}$; we recall that $\p_{\omega}X = \p X$ if $\omega \in X$ and $\p_{\omega}X = \p X \backslash \{\omega\}$ if $\omega \in \p X$. The correspondence is given by showing that any sequence $\{x_{n}\}$ in $X$ converging to a point $\xi \in \p_{\omega} X$ is a Cauchy sequence in $X_{\e}$ converging to a point of $\p X_{\e}$. 


The local compactness of $X_{\e}$ implies by the Arzela-Ascoli theorem that, for a given $x,y \in X$, a minimizing curve $\gamma$ for the right side of \eqref{define distance} always exists. It is easy to see that such a curve must be a geodesic in $X_{\e}$, from which we conclude that $X_{\e}$ is always geodesic. By \cite[Proposition 2.20]{BHK} the completion $\bar{X}_{\e}$ of $X_{\e}$ is proper, and in particular is also locally compact. A second application of Arzela-Ascoli then shows that $\bar{X}_{\e}$ is also geodesic.

We collect here two important quantitative results regarding the uniformization $X_{\e}$ from our previous work \cite{Bu20}. The standing assumptions for the rest of this section are that $X$ is a proper geodesic $\delta$-hyperbolic space with a given $b \in \hat{\mathcal{B}}(X)$  such that $X$ is $K$-roughly starlike from the basepoint $\omega$ of $b$, and that for a given $\e > 0$ the density $\rho_{\e}(x) = e^{-\e b(x)}$ on $X$ is a GH-density with constant $M$. All implied constants will depend only on $\delta$, $K$, $\e$, and $M$.  

\begin{lem}\label{lem:estimate both}\cite[Lemma 4.7]{Bu20}
For $x,y \in X$ we have
\begin{equation}\label{estimate both}
d_{\e}(x,y) \asymp e^{-\e (x|y)_{b}}\min\{1,|xy|\}.
\end{equation}  
\end{lem}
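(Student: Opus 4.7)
The plan is to reduce the lemma to a direct computation of $\ell_{\e}(\gamma)$ for a geodesic $\gamma$ of $X$ from $x$ to $y$, together with the GH-density hypothesis to transfer the length estimate to $d_{\e}(x,y)$. Parametrize $\gamma:[0,L] \to X$ by arclength, where $L = |xy|$, $\gamma(0) = x$, and $\gamma(L) = y$.

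First I would establish the tent-function comparison
$$b(\gamma(t)) \doteq_{c(\delta)} \max\{b(x) - t,\; b(y) - (L - t)\}, \qquad t \in [0, L].$$
The $\geq$ direction is immediate from the $1$-Lipschitz property of $b$. The reverse direction is a standard consequence of $\delta$-thinness of the (possibly ideal) triangle $xy\omega$, with $\omega$ the basepoint of $b$: the point $\gamma(t)$ lies within $O(\delta)$ of a point on the geodesic $xz$ or $yz$ if $b \in \mathcal{D}(X)$, or of a geodesic ray from $x$ or $y$ to $\omega$ if $b \in \mathcal{B}(X)$, and along such a side $b$ decreases at unit rate modulo additive error $O(\delta)$. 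The two linear branches of the tent meet at $t^{\ast} = (L + b(x) - b(y))/2 \in [0,L]$ with common value $(b(x) + b(y) - L)/2 = (x|y)_{b}$.

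Second, I would compute $\ell_{\e}(\gamma) = \int_{0}^{L} e^{-\e b(\gamma(t))}\,dt$ by splitting at $t^{\ast}$ and integrating the two exponentials explicitly. Up to multiplicative constants depending on $c(\delta)$ and $\e$, this yields
$$\ell_{\e}(\gamma) \asymp \tfrac{1}{\e}\, e^{-\e(x|y)_{b}}\bigl(2 - e^{-\e t^{\ast}} - e^{-\e(L - t^{\ast})}\bigr).$$
When $L \leq 1$, Taylor expansion of $1 - e^{-\e s}$ on $s \in [0,1]$ shows the parenthesized factor is $\asymp \e L$, giving $\ell_{\e}(\gamma) \asymp e^{-\e(x|y)_{b}} L$. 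When $L \geq 1$, at least one of $t^{\ast}$, $L - t^{\ast}$ is bounded below by $L/2 \geq 1/2$, so the parenthesized factor is pinned between two positive constants depending only on $\e$, giving $\ell_{\e}(\gamma) \asymp e^{-\e(x|y)_{b}}$. Combining the two cases, $\ell_{\e}(\gamma) \asymp e^{-\e(x|y)_{b}} \min\{1, L\}$.

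Finally I would invoke the GH-density hypothesis to obtain that $d_{\e}(x,y)$ and $\ell_{\e}(\gamma)$ are comparable with constants depending on $M$: the bound $d_{\e}(x,y) \leq \ell_{\e}(\gamma)$ is free from the definition of $d_{\e}$ as an infimum of lengths, while the reverse comparison expresses that geodesics of $X$ nearly minimize $d_{\e}$-length and is the content of \eqref{GH inequality}. Substituting this into the preceding integral estimate yields \eqref{estimate both}. The main obstacle is the tent estimate in the Busemann case $b \in \mathcal{B}(X)$, since one must argue that ideal triangles $xy\omega$ are $O(\delta)$-thin rather than invoking thinness of finite triangles directly; this is standard in $\delta$-hyperbolic geometry and follows either by a limiting argument via Arzel\`a-Ascoli applied to points along a geodesic ray converging to $\omega$ or by the existence estimates already collected in Section~2 of \cite{Bu20}.
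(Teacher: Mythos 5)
The paper does not actually prove this lemma---it is quoted verbatim from \cite[Lemma 4.7]{Bu20}---but your argument is the standard one used there and in Bonk--Heinonen--Koskela: the tent-function estimate for $b$ along a geodesic obtained from thinness of the (possibly ideal) triangle $xy\omega$, explicit integration of the exponential density split at $t^{*}$, and the Gehring--Hayman property to pass from $\ell_{\e}(\gamma)$ back to $d_{\e}(x,y)$; the computation checks out in both the $L\le 1$ and $L\ge 1$ regimes. Your reading of the Gehring--Hayman hypothesis as the nontrivial bound $\ell_{\e,b}(\gamma)\le M\,d_{\e,b}(x,y)$ is the correct one (the displayed inequality \eqref{GH inequality} has the two sides transposed, since in the stated form it is trivially true with $M=1$ from the infimum definition of $d_{\e,b}$), so the final step of your proof is sound.
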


\begin{lem}\label{compute distance}\cite[Lemma 4.15]{Bu20}
For $x \in X$ we have
\begin{equation}\label{compute distance inequality}
d_{\e}(x) \asymp \rho_{\e}(x). 
\end{equation}
\end{lem}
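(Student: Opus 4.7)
The plan is to establish the two comparisons $d_{\e}(x) \ls \rho_{\e}(x)$ and $d_{\e}(x) \gs \rho_{\e}(x)$ separately. The upper bound will exploit $K$-rough starlikeness from $\omega$ to exhibit an explicit short curve in $X_{\e}$ joining $x$ to a point of $\p X_{\e}$, while the lower bound will follow from the Harnack-type comparison \eqref{Harnack} together with the properness of $X$.

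For the upper bound, I would invoke $K$-rough starlikeness to select a geodesic ray or line $\gamma$ emanating from $\omega$ together with a point $y \in \gamma$ satisfying $|xy| \leq K$. The other endpoint $\xi$ of $\gamma$ lies in $\p_{\omega} X$, hence corresponds via $\varphi_{\e}$ to a point of $\p X_{\e}$. Reparametrize the subray of $\gamma$ from $y$ to $\xi$ as $\gamma:[0,\infty) \to X$ with $\gamma(0) = y$. A standard consequence of $\delta$-hyperbolicity, valid uniformly in the two cases $b \in \mathcal{D}(X)$ and $b \in \mathcal{B}(X)$, gives $b(\gamma(t)) \doteq_{c(\delta)} t + b(y)$, so that
\[
\ell_{\e}(\gamma|_{[0,\infty)}) = \int_{0}^{\infty} e^{-\e b(\gamma(t))}\, dt \asymp \e^{-1} \rho_{\e}(y).
\]
The geodesic segment from $x$ to $y$ has $X$-length at most $K$, and \eqref{Harnack} bounds its $\e$-length by $Ke^{\e K}\rho_{\e}(x)$. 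Combining these contributions, and applying \eqref{Harnack} once more to replace $\rho_{\e}(y)$ by $\rho_{\e}(x)$ at the cost of a factor $e^{\e K}$, yields $d_{\e}(x) \leq d_{\e}(x,\xi) \ls \rho_{\e}(x)$.

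For the lower bound, fix any rectifiable curve $\sigma$ in $X_{\e}$ starting at $x$ and terminating at a point of $\p X_{\e}$. By the correspondence $\varphi_{\e}: \p_{\omega}X \to \p X_{\e}$ of \cite[Theorem 1.6]{Bu20}, $\sigma(t)$ converges to a point of $\p_{\omega}X$ in the Gromov compactification as $t \to t_{+}$. Since $X$ is proper, this forces $\sigma(t)$ to escape every compact subset of $X$, and in particular to exit the compact set $\bar{B}_{X}(x,1)$. Letting $t_{0}$ be the smallest time at which $|x\sigma(t_{0})| = 1$, the subcurve $\sigma|_{[0,t_{0}]}$ lies in $\bar{B}_{X}(x,1)$ and has $X$-length at least $1$, and \eqref{Harnack} gives $\rho_{\e} \geq e^{-\e}\rho_{\e}(x)$ throughout this ball. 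Integrating this pointwise lower bound yields $\ell_{\e}(\sigma) \geq \ell_{\e}(\sigma|_{[0,t_{0}]}) \geq e^{-\e}\rho_{\e}(x)$, and passing to the infimum over $\sigma$ gives $d_{\e}(x) \gs \rho_{\e}(x)$.

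The only nontrivial input beyond the two standing hypotheses is the rough linearity $b(\gamma(t)) \doteq_{c(\delta)} t + b(y)$ used in the upper bound. In the case $b \in \mathcal{D}(X)$ this is an exact equality coming from the fact that $\gamma$ is parametrized by arclength from the basepoint $z$ and $b(\cdot) = d(\cdot,z)+s$. In the case $b \in \mathcal{B}(X)$ it is the classical rough-linearity property of Busemann functions along geodesics with one endpoint at the basepoint $\omega$, which should already be readily available from the preliminaries of \cite{Bu20}. The $\delta$-dependent additive error only perturbs the resulting estimate by a multiplicative constant depending on $\delta$ and $\e$, which is consistent with the allowed dependence of the implied constants; so this piece of the argument, while being the only point requiring genuine care, is ultimately routine.
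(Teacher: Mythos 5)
The paper does not actually prove this lemma: it is imported verbatim from \cite[Lemma 4.15]{Bu20}, with only the remark that the case $b \in \mathcal{D}(X)$ reduces to $b \in \mathcal{B}(X)$ by attaching a ray at the basepoint. Your argument is correct and is the standard proof of this estimate: the lower bound $d_{\e}(x) \gs \rho_{\e}(x)$ follows from the Harnack inequality \eqref{Harnack} on $\bar{B}_{X}(x,1)$, since properness forces any curve from $x$ to $\p X_{\e}$ to exit that ball; the upper bound follows from $K$-rough starlikeness plus the rough linearity $b(\gamma(t)) \doteq_{c(\delta)} t + b(y)$ along a geodesic emanating from $\omega$, which makes the $\ell_{\e}$-length of the tail a convergent exponential integral comparable to $\e^{-1}\rho_{\e}(y)$. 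The few details you leave implicit are routine: that $d_{\e}(x)$ is realized (up to truncation at the first boundary hit) by curves in $X_{\e}$ tending to $\p X_{\e}$, which holds because $\bar{X}_{\e}$ is geodesic, and that the forward endpoint $\xi$ of the line in the case $\omega \in \p X$ is distinct from $\omega$, so that it genuinely corresponds to a point of $\p X_{\e}$ under $\varphi_{\e}$.
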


Lemmas \ref{lem:estimate both} and \ref{compute distance} are stated for $b \in \mathcal{B}(X)$ in \cite{Bu20}, however as noted in \cite[Remark 4.24]{Bu20} the estimates for $b \in \mathcal{D}(X)$ can be deduced from the estimates for $b \in \mathcal{B}(X)$ by attaching a ray to $X$ at the basepoint of a given $b \in \mathcal{D}(X)$.

We conclude this section by adapting two key claims from \cite{BBS20} to our setting. The first claim adapts \cite[Theorem 2.10]{BBS20}. The proof is essentially the same. 

\begin{lem}\label{sub inclusion}
There is a constant  $C_{*} = C_{*}(\delta,K,\e,M) \geq 1$ such that for any $x \in X$ and any $0 < r \leq \frac{1}{2}d_{\e}(x)$ we have the inclusions,
\begin{equation}\label{sub inclusion equation}
B_{X}\left(x,\frac{C_{*}^{-1}r}{\rho_{\e}(x)}\right) \subset B_{\e}(x,r) \subset B_{X}\left(x,\frac{C_{*}r}{\rho_{\e}(x)}\right).
\end{equation}
\end{lem}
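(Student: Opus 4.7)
The plan is to exploit the condition $r \leq \tfrac{1}{2} d_{\e}(x)$ to show that on the relevant neighborhood of $x$ the density $\rho_{\e}$ is comparable to $\rho_{\e}(x)$ up to multiplicative constants depending only on the data. Once that pointwise comparison is established, both inclusions reduce to direct length estimates. As a preliminary I would verify that for every $z \in X$ with $d_{\e}(x,z) \leq r$, one has $\rho_{\e}(z) \asymp \rho_{\e}(x)$. Indeed, since $d_{\e}(\cdot)$ is $1$-Lipschitz as a distance to the boundary, the inequalities $\tfrac{1}{2} d_{\e}(x) \leq d_{\e}(z) \leq \tfrac{3}{2} d_{\e}(x)$ hold, and applying Lemma \ref{compute distance} at both $x$ and $z$ then yields the desired comparison.

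For the right-hand inclusion in \eqref{sub inclusion equation}, pick $y \in B_{\e}(x,r)$ and choose a curve $\alpha$ in $X$ from $x$ to $y$ with $\ell_{\e}(\alpha) < r$. Every point of $\alpha$ lies in $B_{\e}(x,r)$, so the preliminary step gives $\rho_{\e} \asymp \rho_{\e}(x)$ along $\alpha$. Dividing and multiplying the integrand by $\rho_{\e}$, I obtain
\[
|xy| \;\leq\; \ell(\alpha) \;=\; \int_{\alpha} \rho_{\e}(z)^{-1} \rho_{\e}(z)\,ds \;\asymp\; \frac{\ell_{\e}(\alpha)}{\rho_{\e}(x)} \;<\; \frac{C r}{\rho_{\e}(x)},
\]
for some constant $C$ depending only on the data, which is one of the two inclusions once $C_{*} \geq C$ is chosen.

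For the left-hand inclusion, take $y \in X$ with $|xy| < C_{*}^{-1} r / \rho_{\e}(x)$. Because $r \leq \tfrac{1}{2} d_{\e}(x)$ and Lemma \ref{compute distance} gives $d_{\e}(x) \asymp \rho_{\e}(x)$, the ratio $r/\rho_{\e}(x)$ is uniformly bounded by a data-dependent constant; requiring $C_{*} \geq 1$ then forces $|xy| \ls 1$. The Harnack-type bound \eqref{Harnack} applied along the $X$-geodesic $xy$ therefore gives $\rho_{\e} \asymp \rho_{\e}(x)$ on $xy$, so that
\[
d_{\e}(x,y) \;\leq\; \ell_{\e}(xy) \;=\; \int_{xy} \rho_{\e}(z)\,ds \;\asymp\; \rho_{\e}(x) \, |xy| \;<\; C' C_{*}^{-1} r
\]
for some constant $C'$ depending only on the data. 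Taking $C_{*}$ at least as large as both $C$ and $C'$ yields both inclusions simultaneously.

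The main obstacle is just the bookkeeping of implicit constants and confirming that the resulting $C_{*}$ depends only on the data and not on the particular $x$ or $r$. No additional ideas beyond Lemma \ref{compute distance} and the Harnack-type estimate \eqref{Harnack} are required; in particular, the GH-density hypothesis does not enter the argument, since for both inclusions we compare lengths of the \emph{same} curve in the two metrics using only the pointwise comparison of $\rho_{\e}$.
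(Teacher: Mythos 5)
Your proposal is correct and follows essentially the same route as the paper: both inclusions come down to showing $\rho_{\e} \asymp \rho_{\e}(x)$ along the relevant curve (via Lemma \ref{compute distance} for points of $B_{\e}(x,r)$ and via the Harnack inequality \eqref{Harnack} for points of the small $X$-ball), and then comparing the lengths of that single curve in the two metrics. The only cosmetic differences are that the paper uses an exact geodesic in $X_{\e}$ where you use a near-minimizing curve, and it only needs the one-sided bound $\rho_{\e}(z) \gtrsim \rho_{\e}(x)$ for the right-hand inclusion; note also that the GH-density and rough starlikeness hypotheses do enter your argument implicitly, through the constant in Lemma \ref{compute distance}.
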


\begin{proof}
Let $y \in B_{X}(x,C^{-1}_{*}r/\rho_{\e}(x))$, for a constant $C_{*} \geq 1$ to be determined. Let $\gamma$ be a geodesic in $X$ joining $x$ to $y$ and let $z \in \gamma$. Then, since $r \leq \frac{1}{2}d_{\e}(x)$, we have by Lemma \ref{compute distance},
\[
|xz| \leq \frac{C^{-1}_{*}d_{\e}(x)}{2\rho_{\e}(x)} \leq C_{*}^{-1}C,
\]
with $C = C(\delta,K,\e,M) \geq 1$. This then implies by the Harnack inequality \eqref{Harnack},
\[
\rho_{\e}(z) \asymp_{e^{C_{*}^{-1}C \e}} \rho_{\e}(x).
\]
Choosing $C_{*}$ large enough that $e^{C_{*}^{-1}C \e} < 2$, we then obtain that
\[
\rho_{\e}(z) \asymp_{2} \rho_{\e}(x),
\]
for $z \in \gamma$. We conclude that
\begin{align*}
d_{\e}(x,y) &\leq \int_{\gamma}\rho_{\e}\,ds \\
&\leq 2\rho_{\e}(x)|xy| \\
&\leq 2C_{*}^{-1}r \\
&< r,
\end{align*}
provided we take $C_{*} > 2$. This gives the inclusion on the left side of \eqref{sub inclusion equation}.

For the inclusion on the right side of \eqref{sub inclusion equation}, let $y \in B_{\e}(x,r)$ and let $\gamma_{\e}$ be a geodesic in $X_{\e}$ connecting $x$ to $y$. For $z \in \gamma_{\e}$ we then have $z \in B_{\e}(x,r)$ and therefore $d_{\e}(z) \geq \frac{1}{2} d_{\e}(x)$  by the triangle inequality since $r \leq \frac{1}{2} d_{\e}(x)$. Applying Lemma \ref{compute distance}, we then have
\begin{align*}
\rho_{\e}(z) &\geq C^{-1}d_{\e}(z) \\
&\geq \frac{1}{2}C^{-1}d_{\e}(x) \\
&\geq C^{-1}\rho_{\e}(x),
\end{align*}
for a constant $C = C(\delta,K,\e,M) \geq 1$. Using this we conclude that
\[
r > d_{\e}(x,y) = \int_{\gamma_{\e}}\rho_{\e}\,ds \geq C^{-1}\rho_{\e}(x)|xy|,
\]
since $\ell(\gamma_{\e}) \geq |xy|$. Choosing $C_{*}$ to be greater than the constant $C$ on the right side of this inequality, we then conclude that
\[
|xy| < \frac{C_{*}r}{\rho_{\e}(x)},
\]
which gives the right side inclusion in \eqref{sub inclusion equation}. 
\end{proof}

Following \cite{BBS20}, the balls $B_{\e}(x,r)$ for $x \in X_{\e}$, $0 < r \leq \frac{1}{2}d_{\e}(x)$ will often be referred to as \emph{subWhitney balls}.

The second claim adapts \cite[Lemma 4.8]{BBS20} to our setting. The proof given in \cite{BBS20} strongly relies on the uniformization $X_{\e}$ being bounded in their setting, so when $b \in \mathcal{B}(X)$ we will have to take an approach that is somewhat different. 

\begin{lem}\label{choosing center}
There is a constant $\kappa_{0} = \kappa_{0}(\delta,K,\e,M)$ such that for every $x \in \bar{X}_{\e}$ and every $0 < r \leq 2\,\diam\, X_{\e}$ we can find a ball $B_{\e}(z,\kappa_{0} r) \subset B_{\e}(x,r)$ with $d_{\e}(z) \geq 2\kappa_{0} r$. 
\end{lem}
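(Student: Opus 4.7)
The plan is to produce $z$ by moving from $x$ along a geodesic in $X$ toward the basepoint $\omega$ of $b$, exploiting that the density $\rho_\e$ grows exponentially in this direction. Let $\gamma : [0,T] \to X$ be a unit-speed geodesic in $X$ from $\gamma(0) = x$ to $\omega$, where $T = \infty$ when $\omega \in \p X$ and $T = d(x,\omega)$ when $\omega \in X$; such a geodesic exists by properness of $X$ and Arzel\`a--Ascoli. Standard properties of Busemann functions (via the change-of-basepoint formula, which gives bounded error depending only on $\delta$) or the triangle identity in the distance case yield
\[
b(\gamma(t)) \doteq_{c(\delta)} b(x) - t,
\]
and hence $\rho_\e(\gamma(t)) \asymp e^{\e t}\rho_\e(x)$. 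Lemma \ref{compute distance} then converts this to
\[
d_\e(\gamma(t)) \asymp e^{\e t}\rho_\e(x), \qquad d_\e(x,\gamma(t)) \leq \ell_\e(\gamma|_{[0,t]}) \asymp \rho_\e(x)\cdot\frac{e^{\e t}-1}{\e}.
\]
The trade-off between these two quantities drives the argument.

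First I would reduce to $x \in X$: if $x \in \p X_\e$, pick $x' \in X$ with $d_\e(x,x') < r/10$ (possible since $X$ is dense in $\bar{X}_\e$), so that a center $z$ obtained for the pair $(x', 9r/10)$ also serves $(x,r)$ with a marginally smaller $\kappa_0$. Then I split on the size of $\rho_\e(x)$ relative to $r$. If $\rho_\e(x) \gs r$, Lemma \ref{compute distance} immediately yields $d_\e(x) \gs r$, so $z = x$ works for $\kappa_0$ small. Otherwise, I choose $t_* \in (0,T)$ satisfying $\rho_\e(x)(e^{\e t_*}-1) \asymp \e r$; the two estimates above then force $d_\e(x,\gamma(t_*)) \leq r/2$ and $d_\e(\gamma(t_*)) \gs \e r$. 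Setting $z = \gamma(t_*)$ and $\kappa_0 \asymp \min(\e,1)$, both conclusions follow: $B_\e(z,\kappa_0 r) \subset B_\e(x,r)$ by the triangle inequality, and $d_\e(z) \geq 2\kappa_0 r$ by construction.

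The main technicality arises in the distance case $b \in \mathcal{D}(X)$, where $X_\e$ is bounded, $\gamma$ has finite $d_\e$-length at most $\diam X_\e$, and the prescribed $t_*$ may exceed $T$. When this occurs the hypothesis $r \leq 2\diam X_\e$ forces $r$ to be comparable to $\diam X_\e$, and I would instead take $z = \omega$: estimates from \cite{Bu20} give $d_\e(\omega) \asymp \rho_\e(\omega) \gs \e\,\diam X_\e \gs \e r$, while $d_\e(x,\omega) \leq \diam X_\e \leq r$ yields $B_\e(\omega,\kappa_0 r) \subset B_\e(x,r)$ for $\kappa_0 < 1/2$. Handling this endpoint case cleanly, and uniformly controlling the implicit constants across both regimes of $\rho_\e(x)$, is the main thing to watch.
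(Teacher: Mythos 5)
Your route is genuinely different from the paper's. The paper does not walk toward the basepoint of $b$; instead it handles $b\in\mathcal{D}(X)$ by citing \cite[Lemma 4.8]{BBS20} and a rescaling, and for $b\in\mathcal{B}(X)$ it uses the intermediate value theorem on $y\mapsto d_{\e}(y)$ (which is onto $(0,\infty)$ since $X_{\e}$ is unbounded and connected) to pick a point $z_{0}$ with $d_{\e}(z_{0})=r$ exactly, joins $x$ to $z_{0}$ by a geodesic of $X$, and uses only the $A$-uniformity of that curve in $X_{\e}$: either stop after $d_{\e}$-length $r/3$ (uniformity then gives $d_{\e}(z)\geq r/(3A)$) or, if the curve is shorter than $2r/3$, take $z=z_{0}$. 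Your argument instead exploits the explicit exponential profile $\rho_{\e}(\gamma(t))\asymp e^{\e t}\rho_{\e}(x)$ along geodesics toward $\omega$ together with Lemma \ref{compute distance}; this treats $\mathcal{D}(X)$ and $\mathcal{B}(X)$ uniformly without outsourcing the bounded case, at the cost of a case analysis on $\rho_{\e}(x)$ versus $r$ and an endpoint case. Both are legitimate; the paper's version avoids your endpoint issue entirely because its target point $z_{0}$ is chosen by the IVT rather than as the endpoint of the ray.

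There is one step in your write-up that is wrong as stated. In the endpoint case you justify $B_{\e}(\omega,\kappa_{0}r)\subset B_{\e}(x,r)$ by ``$d_{\e}(x,\omega)\leq\diam X_{\e}\leq r$'', but the hypothesis is $r\leq 2\diam X_{\e}$, which gives the \emph{opposite} inequality $\diam X_{\e}\geq r/2$; at best your sub-case assumptions yield $\diam X_{\e}\lesssim r$ with a comparison constant that may exceed $1$, which does not give the inclusion. The correct justification is already implicit in how the sub-case is defined: $t_{*}>T$ means precisely that $\rho_{\e}(x)(e^{\e T}-1)$ falls below the target value $\asymp\e r$ chosen so that the length estimate produces $r/2$, hence
\[
d_{\e}(x,\omega)\;\leq\;\ell_{\e}(\gamma)\;\lesssim\;\rho_{\e}(x)\,\frac{e^{\e T}-1}{\e}\;<\;\frac{r}{2},
\]
and the inclusion follows for $\kappa_{0}<1/2$. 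With that repair (and your lower bound $d_{\e}(\omega)\asymp\rho_{\e}(\omega)\gtrsim\diam X_{\e}\geq r/2$, which is fine), the argument goes through with constants depending only on $\delta$, $K$, $\e$, $M$.
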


\begin{proof}
The claim for $b \in \mathcal{D}(X)$ of the form $b_{z}(x) = d(x,z)$ for some $z \in X$ follows from repeating the proof of \cite[Lemma 4.8]{BBS20} in our setting. For $b \in \mathcal{D}(X)$ of the form $b(x) = d(x,z)+s$ for some $z \in X$, $s \in \R$, the claim then follows by observing that $X_{\e} = e^{-s}X_{\e,z}$, i.e., $X_{\e}$ is obtained by scaling by a factor of $e^{-s}$ the metric on the conformal deformation of $X$ by $\rho_{\e,z}(x) = e^{-\e|xz|}$. We can thus assume that $b \in \mathcal{B}(X)$ with basepoint $\omega \in \p X$, which implies that $\diam \, X_{\e} = \infty$.

Let $x \in \bar{X}_{\e}$ and $r > 0$ be given. The function $y \rightarrow d_{\e}(y)$ on $X_{\e}$ is continuous, positive, unbounded (since $X_{\e}$ is unbounded) and takes values arbitrarily close to $0$ since $d_{\e}(x_{n}) \rightarrow 0$ for any sequence of points $\{x_{n}\}$ in $X_{\e}$ converging to a point of $\p X_{\e}$. Since $X_{\e}$ is connected we can then conclude by the intermediate value theorem that $d_{\e}(X_{\e}) = (0,\infty)$, i.e., for any $r > 0$ we can find a point $z_{0} \in X_{\e}$ such that $d_{\e}(z_{0}) = r$. For our given $r > 0$ we fix such a point $z_{0}$ and let $\sigma$ be a geodesic in $X$ joining $x$ to $z_{0}$; recall that we can consider points $x \in \p X_{\e}$ as points of $\p_{\omega}X$ through the identification $\p X_{\e} \cong \p_{\omega}X$. Then $\sigma$ is an $A$-uniform curve in $X_{\e}$ with $A = A(\delta,K,\e,M) \geq 1$. Since $\sigma$ does not have $\omega$ as an endpoint, it has finite length $\ell_{\e}(\sigma) \leq Ad_{\e}(x,z_{0})$ in $X_{\e}$. We parametrize $\sigma$ by $d_{\e}$-arclength and orient it from $x$ to $z_{0}$. 

We first assume that $\ell_{\e}(\sigma) \geq \frac{2}{3}r$. In this case we set $z = \sigma(\frac{1}{3}r)$. Then since $\sigma$ is $A$-uniform we have $d_{\e}(z) \geq \frac{r}{3A}$ and 
\[
B_{\e}\left(z,\frac{r}{6A}\right) \subset B_{\e}\left(x,\frac{r}{3}+\frac{r}{6A}\right) \subset B_{\e}(x,r). 
\]
So in this case we can use any $\kappa \leq \frac{1}{6A}$. 

Now consider the case in which $\ell_{\e}(\sigma) < \frac{2}{3}r$. We then set $z = z_{0}$ and observe that 
\[
B_{\e}\left(z_{0},\frac{r}{3}\right) \subset B_{\e}\left(x,\ell_{\e}(\sigma)+ \frac{r}{3}\right) \subset B_{\e}(x,r). 
\]
By construction we have $d_{\e}(z_{0}) = r$. Thus in this case any $\kappa \leq \frac{1}{3}$ will work. By combining these two cases we can then set $\kappa_{0} = \frac{1}{6A}$, noting that $A \geq 1$. 
\end{proof}

The conclusion of Lemma \ref{choosing center} is closely related to the \emph{corkscrew condition} for domains in metric spaces. See \cite[Definition 2.4]{BJS07}.

\section{Doubling for uniformized measures}\label{sec:doubling} 


In this section we will prove Theorem \ref{doubling threshold} and lay some of the groundwork for proving our other theorems. We will frequently make use of the following consequence of the doubling estimate \eqref{define doubling measure} for a metric measure space $(X,d,\mu)$: if $\mu$ is doubling on balls of radius at most $R_{0}$ with constant $C_{\mu}$ and $0 < r \leq R \leq R_{0}$ then 
\begin{equation}\label{define doubling consequence}
\mu(B_{X}(x,R)) \asymp_{C} \mu(B_{X}(x,r)),
\end{equation}
with constant $C$ depending only on $C_{\mu}$ and the ratio $R/r$. This estimate follows by iterating the estimate \eqref{define doubling measure} and noting that $\mu(B_{X}(x,R)) \geq \mu(B_{X}(x,r))$ since $B_{X}(x,r) \subset B_{X}(x,R)$. 

We will require the following proposition from \cite{BBS20}, which is stated there in a more general form. 

\begin{prop}\label{enlarge doubling}\cite[Proposition 3.2]{BBS20} Let $(X,d)$ be a geodesic metric  space and let $\mu$ be a Borel measure on $X$ that is doubling on balls of radius at most $R_{0}$ with doubling constant $C_{\mu}$. Then for any $R_{1} > 0$ the measure $\mu$ is doubling on balls of radius at most $R_{1}$, with doubling constant depending only on $R_{1}/R_{0}$ and $C_{\mu}$. 
\end{prop}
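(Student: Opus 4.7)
The statement is trivial when $R_1 \leq R_0$ (the given constant $C_\mu$ already works), so I focus on $R_1 > R_0$. Fix $r \leq R_1$ and $x \in X$; for $r \leq R_0$ the hypothesis yields $\mu(B(x, 2r)) \leq C_\mu \mu(B(x, r))$ directly, so I may assume $R_0 < r \leq R_1$. The plan is to cover $B(x, 2r)$ by balls of radius $R_0/4$ (a scale at which local doubling is directly usable), and to compare the measure of each small ball to $\mu(B(x,r))$ via a geodesic chain back to $x$.

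Concretely, take $y_1 = x$ and extend it to a maximal $R_0/4$-separated set $\{y_1, \dots, y_N\}$ in $B(x, 2r)$, so that $\{B(y_i, R_0/4)\}$ covers $B(x, 2r)$ and $\{B(y_i, R_0/8)\}$ is disjoint. For each $y_i \neq y_1$, geodesicity of $X$ produces a point $w$ on a geodesic from $x$ to $y_i$ with $d(y_i, w) = \min(R_0/2, d(x,y_i))$; maximality then yields some $y_{\pi(i)}$ with $d(y_{\pi(i)}, w) < R_0/4$. A direct estimate shows that the parent map $\pi$ satisfies
\[
d(y_i, y_{\pi(i)}) < \tfrac{3}{4} R_0 \qquad \text{and} \qquad d(x, y_{\pi(i)}) \leq d(x, y_i) - \tfrac{1}{4} R_0,
\]
so iterating $\pi$ reaches $y_1 = x$ in at most $L := \lceil 8 R_1/R_0 \rceil$ steps. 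This organizes $\{y_1, \dots, y_N\}$ into a rooted tree of depth at most $L$.

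Two estimates then combine. Along each parent–child edge, $B(y_i, R_0/4) \subset B(y_{\pi(i)}, R_0)$, so applying local doubling twice gives $\mu(B(y_i, R_0/4)) \leq C_\mu^{2} \mu(B(y_{\pi(i)}, R_0/4))$. Iterating to the root yields
\[
\mu(B(y_i, R_0/4)) \leq C_\mu^{2L} \mu(B(x, R_0/4)) \leq C_\mu^{2L} \mu(B(x, r)).
\]
To bound $N$ I use a tree-local packing: the children of any fixed $y_j$ all lie in $B(y_j, 3R_0/4)$, the corresponding disjoint balls $B(y_i, R_0/8)$ sit inside $B(y_j, R_0)$, and for each such child $B(y_j, R_0) \subset B(y_i, 2R_0)$, so local doubling applied a bounded number of times shows $\mu(B(y_i, R_0/8)) \geq C_\mu^{-4} \mu(B(y_j, R_0))$. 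Summing over children forces the branching factor of the tree to be at most $C_\mu^{4}$, hence $N \leq C_\mu^{4L}$. Combining,
\[
\mu(B(x, 2r)) \leq \sum_i \mu(B(y_i, R_0/4)) \leq N \cdot C_\mu^{2L} \mu(B(x, r)) \leq C_\mu^{6L} \mu(B(x, r)),
\]
with $L$ depending only on $R_1/R_0$, as desired.

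The main obstacle is the bound on $N$: a single global packing inequality would only give $N \cdot \mu(B(y_i, R_0/8)) \leq \mu(B(x, 3r))$, which is circular, since controlling $\mu(B(x,3r))$ in terms of $\mu(B(x,r))$ is essentially what we are trying to prove. The tree structure circumvents this by replacing a global packing bound by a local one at each vertex, where the local doubling hypothesis applies directly.
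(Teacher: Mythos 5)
Your argument is correct. Note that the paper itself offers no proof of this proposition --- it is imported verbatim as \cite[Proposition 3.2]{BBS20} --- so there is nothing internal to compare against; what you have written is a valid self-contained proof along the standard lines (cover the big ball by balls at scale $R_{0}$, use geodesicity to chain each one back to the center, and control the cardinality of the cover by a packing bound localized at each vertex of the chain tree, which is exactly the right way to avoid the circularity you point out at the end). The only blemishes are bookkeeping: the depth bound should be $L+1$ rather than $L$ in the degenerate last step, and summing the branching bound over the levels of the tree gives $N \leq (L+1)C_{\mu}^{4L}$ rather than $C_{\mu}^{4L}$; also the division by $\mu(B(y_{j},R_{0}))$ in the packing step tacitly assumes this measure is positive and finite (if it is zero the whole subtree contributes nothing, and if it is infinite the target inequality is vacuous, so both edge cases are harmless). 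None of this affects the conclusion that the new doubling constant depends only on $R_{1}/R_{0}$ and $C_{\mu}$.
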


Thus if $\mu$ is doubling on balls of radius at most $R_{0}$ then given any $R_{1} > 0$ we can assume that $\mu$ is also doubling on balls of radius at most $R_{1}$, at the cost of increasing the uniform local doubling constant of $\mu$ by an amount depending only on $R_{1}/R_{0}$ and $C_{\mu}$. 


We now describe the setting of this section. We begin with a proper geodesic $\delta$-hyperbolic $X$ together with a function $b \in \hat{\mathcal{B}}(X)$ with basepoint $\omega$ such that $X$ is $K$-roughly starlike from $\omega$. We let $\e > 0$ be such that the associated density $\rho_{\e}$ is a GH-density for $X$ with constant $M$. As in the previous section we write $X_{\e}$ for the uniformization of $X$, $d_{\e}$ for the distance on $X_{\e}$, etc. We let $\mu$ be a Borel regular measure on $X$ such that there is an $R_{0} > 0$ for which $\mu$ is doubling on balls of radius at most $R_{0}$ with doubling constant $C_{\mu}$. For a given $\beta > 0$ we then define the uniformized measure $\mu_{\beta} = \mu_{\beta,b}$ on $\bar{X}_{\e}$ as in \eqref{define beta}. 



In the claims in the rest of this section all implicit constants will depend only on $\delta$, $K$, $\e$, $M$, $\beta$, $R_{0}$, and $C_{\mu}$. We will refer to this collection of seven parameters as the \emph{data}. We will refer to the specific parameters $\delta$, $K$, $\e$, $M$, and $\beta$ as the \emph{uniformization data} and say that a constant depends only on the uniformization data if it depends only on these five parameters.  At several points we will need to increase the radius $R_{0}$ by an amount depending only on the uniformization data in order to ensure that $\mu$ is doubling at a larger scale using Proposition \ref{enlarge doubling}. When we do this we will also need to increase $C_{\mu}$ by a corresponding amount depending only on the uniformization data and the local doubling constant $C_{\mu}$ for $\mu$. 

\begin{rem}\label{exclude beta}
We will also often refer to just the four parameters $\delta$, $K$, $\e$, and $M$ as the uniformization data. It will be clear from context when $\beta$ can and cannot be excluded from the list. 
\end{rem}

The first part of this section will be devoted to proving the following technical criterion for $\mu_{\beta}$ to be doubling on $\bar{X}_{\e}$. Throughout this section we let $\kappa_{0} = \kappa_{0}(\delta,K,\e,M)$ be defined as in Lemma \ref{choosing center} and set $\kappa_{1} = \kappa_{0}/10$. 

\begin{prop}\label{global doubling}
Suppose that there is a constant $C_{0} \geq 1$ such that for any $\xi \in \p X_{\e}$, $r > 0$, and $z \in X$ we have  that whenever $B_{\e}(z,\kappa_{1} r) \subset B_{\e}(\xi,r)$ and $d_{\e}(z) \geq 2\kappa_{1} r$,
\begin{equation}\label{controlled upper}
\mu_{\beta}(B_{\e}(\xi,r)) \leq C_{0}r^{\beta/\e}\mu(B_{X}(z,R_{0})).
\end{equation}
Then $\mu_{\beta}$ is doubling on $\bar{X}_{\e}$ with doubling constant $C_{\mu_{\beta}}$ depending only on the data and $C_{0}$. 
\end{prop}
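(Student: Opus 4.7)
The plan is to verify the doubling inequality $\mu_{\beta}(B_{\e}(x, 2r)) \lesssim \mu_{\beta}(B_{\e}(x, r))$ for all $x \in \bar{X}_{\e}$ and $r > 0$, splitting on whether $r$ is small or large relative to $d_{\e}(x)$. Throughout I would freely enlarge the local doubling radius $R_{0}$ via Proposition \ref{enlarge doubling} so that it dominates any uniformization-dependent constants that appear below.

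\emph{Case 1: $d_{\e}(x) \geq 4r$.} Here $B_{\e}(x, 2r)$ is subWhitney, and Lemma \ref{sub inclusion} sandwiches both $B_{\e}(x, 2r)$ and $B_{\e}(x, r)$ between $X$-balls centered at $x$ with radii proportional to $r/\rho_{\e}(x)$. By Lemma \ref{compute distance} we have $r/\rho_{\e}(x) \lesssim r/d_{\e}(x) \leq 1/4$, so all these $X$-radii are bounded by a universal constant, and the Harnack inequality \eqref{Harnack} then forces $\rho_{\beta} \asymp \rho_{\beta}(x)$ on them. Combining this with the iterated doubling estimate \eqref{define doubling consequence} applied to $\mu$ closes the case.

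\emph{Case 2: $d_{\e}(x) < 4r$.} For the upper bound, pick $\xi \in \p X_{\e}$ with $d_{\e}(x, \xi) < 4r$ (taking $\xi = x$ if $x \in \p X_{\e}$), so that $B_{\e}(x, 2r) \subset B_{\e}(\xi, 6r)$. Apply Lemma \ref{choosing center} to $B_{\e}(x, r)$ to produce $z \in X_{\e}$ with $B_{\e}(z, \kappa_{0} r) \subset B_{\e}(x, r)$ and $d_{\e}(z) \geq 2\kappa_{0} r$. Since $\kappa_{1} = \kappa_{0}/10$, the relations $B_{\e}(z, 6\kappa_{1} r) \subset B_{\e}(z, \kappa_{0} r) \subset B_{\e}(\xi, 6r)$ and $d_{\e}(z) \geq 2\kappa_{0} r > 12\kappa_{1} r$ hold automatically, so hypothesis \eqref{controlled upper} applies with this $\xi$ and radius $6r$, yielding
\[
\mu_{\beta}(B_{\e}(x, 2r)) \leq \mu_{\beta}(B_{\e}(\xi, 6r)) \leq C_{0}(6r)^{\beta/\e}\,\mu(B_{X}(z, R_{0})).
\]
For the matching lower bound I reuse the same $z$: since $z \in B_{\e}(x, r)$ and $d_{\e}(x) < 4r$, we have $2\kappa_{0} r \leq d_{\e}(z) \leq r + d_{\e}(x) < 5r$, so Lemma \ref{compute distance} gives $\rho_{\e}(z) \asymp r$ and hence $\rho_{\beta}(z) \asymp r^{\beta/\e}$. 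Lemma \ref{sub inclusion} turns the subWhitney ball $B_{\e}(z, \kappa_{0} r)$ into an $X$-ball $B_{X}(z, c_{0})$ of some constant radius $c_{0} \asymp 1$, producing $\mu_{\beta}(B_{\e}(x, r)) \geq \mu_{\beta}(B_{\e}(z, \kappa_{0} r)) \gtrsim r^{\beta/\e}\mu(B_{X}(z, c_{0}))$. After enlarging $R_{0}$ so that $R_{0} \geq c_{0}$, a final application of \eqref{define doubling consequence} gives $\mu(B_{X}(z, R_{0})) \lesssim \mu(B_{X}(z, c_{0}))$, and combining the two bounds completes the doubling estimate.

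The main obstacle—and the reason for the precise calibration $\kappa_{1} = \kappa_{0}/10$—is that a \emph{single} point $z$ must simultaneously serve as the center of the subWhitney lower bound on $B_{\e}(x, r)$ and as the reference point required by hypothesis \eqref{controlled upper} applied to $B_{\e}(\xi, 6r)$. If distinct points $z, z'$ were forced on us, we would have to compare $\mu(B_{X}(z, R_{0}))$ with $\mu(B_{X}(z', c_{0}))$, which is not directly controllable, since the $X$-distance $|zz'|$ can be unbounded even when $z, z'$ lie close together in $X_{\e}$. The calibration $\kappa_{1} = \kappa_{0}/10$ is precisely what makes this dual role possible.
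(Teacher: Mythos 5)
Your proposal is correct and follows essentially the same strategy as the paper: the subWhitney case handled by Lemma \ref{sub inclusion}/Lemma \ref{compute distance} plus the Harnack inequality and local doubling, and the boundary case handled by enclosing $B_{\e}(x,2r)$ in a ball $B_{\e}(\xi,Cr)$ about a boundary point, invoking Lemma \ref{choosing center} to produce the single point $z$ that serves both the hypothesis \eqref{controlled upper} at the enlarged radius and the matching subWhitney lower bound inside $B_{\e}(x,r)$. The only differences are cosmetic (you use radius $6r$ where the paper uses $5r$ and $10r$, and you inline the estimates that the paper packages as Lemmas \ref{sub measure} and \ref{estimate upper}), and your closing remark about the calibration $\kappa_{1}=\kappa_{0}/10$ matches the paper's reason for that choice.
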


We have formulated Proposition \ref{global doubling} in the manner that is most convenient for us to verify in practice, however this comes at the cost of obscuring the connection of the inequality \eqref{controlled upper} to the doubling property for $\mu_{\beta}$. In order to prove Proposition \ref{global doubling} we will need a series of lemmas that establish this connection. Our first claim corresponds to \cite[Lemma 4.5]{BBS20}. It provides an estimate on the measure of subWhitney balls in $X_{\e}$.  


\begin{lem}\label{sub measure}
Let $x \in X$ and $0 < r \leq \frac{1}{2}d_{\e}(x)$. Then 
\[
\mu_{\beta}(B_{\e}(x,r)) \asymp \rho_{\beta}(x)\mu\left(B_{X}\left(x, \frac{r}{\rho_{\e}(x)}\right)\right),
\]
with comparison constant depending only on the data.
\end{lem}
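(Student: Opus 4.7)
The plan is to sandwich the ball $B_\e(x,r)$ between two Euclidean balls using Lemma \ref{sub inclusion}, then exploit a Harnack-type estimate to pull $\rho_\beta(x)$ out of the integral, and finally apply the local doubling property of $\mu$ to collapse the three resulting radii down to a single one.

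First I would combine Lemma \ref{compute distance} with the hypothesis $r \leq \tfrac{1}{2}d_\e(x)$ to get a bound of the form $r/\rho_\e(x) \leq C_1$ where $C_1$ depends only on the uniformization data. Then by Lemma \ref{sub inclusion} we have
\[
B_{X}\!\left(x,\tfrac{C_*^{-1}r}{\rho_\e(x)}\right) \subset B_\e(x,r) \subset B_{X}\!\left(x,\tfrac{C_*r}{\rho_\e(x)}\right),
\]
and both Euclidean radii are bounded above by the constant $C_*C_1$. For any $y$ in the outer Euclidean ball we have $|xy| \leq C_*C_1$, so the Lipschitz property of $b$ (equivalently, the Harnack inequality \eqref{Harnack} for $\rho_\e$ and its analogue for $\rho_\beta$) yields $\rho_\beta(y) \asymp \rho_\beta(x)$ with comparison constant $e^{\beta C_*C_1}$ depending only on the uniformization data.

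Next I would integrate $\rho_\beta$ over the inclusions above. The upper inclusion combined with the Harnack comparison gives
\[
\mu_\beta(B_\e(x,r)) \;\leq\; \int_{B_X(x,C_*r/\rho_\e(x))} \rho_\beta \, d\mu \;\lesssim\; \rho_\beta(x)\, \mu\!\left(B_X\!\left(x,\tfrac{C_*r}{\rho_\e(x)}\right)\right),
\]
while the lower inclusion likewise gives a lower bound
\[
\mu_\beta(B_\e(x,r)) \;\gtrsim\; \rho_\beta(x)\, \mu\!\left(B_X\!\left(x,\tfrac{C_*^{-1}r}{\rho_\e(x)}\right)\right).
\]

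Finally I would collapse the radii using the doubling property. Since all three radii $C_*^{-1}r/\rho_\e(x)$, $r/\rho_\e(x)$, $C_*r/\rho_\e(x)$ are bounded above by $C_*C_1$, I invoke Proposition \ref{enlarge doubling} to promote $\mu$ to being doubling on balls up to radius $C_*C_1$ (at the cost of increasing the doubling constant by an amount depending only on the data). The standard iteration \eqref{define doubling consequence} then yields
\[
\mu\!\left(B_X\!\left(x,\tfrac{C_*r}{\rho_\e(x)}\right)\right) \asymp \mu\!\left(B_X\!\left(x,\tfrac{r}{\rho_\e(x)}\right)\right) \asymp \mu\!\left(B_X\!\left(x,\tfrac{C_*^{-1}r}{\rho_\e(x)}\right)\right),
\]
with constants depending only on the data since the ratios of consecutive radii are fixed. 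Combining these comparisons with the two bounds for $\mu_\beta(B_\e(x,r))$ gives the desired two-sided estimate. The only mildly delicate step is the enlargement of the doubling radius, but this is handled cleanly by Proposition \ref{enlarge doubling} and is the standard device for this kind of argument.
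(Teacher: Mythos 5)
Your proposal is correct and follows essentially the same route as the paper's proof: sandwich $B_{\e}(x,r)$ between metric balls of $X$ via Lemma \ref{sub inclusion}, use the near-constancy of $\rho_{\beta}$ on these balls to factor it out, and collapse the radii with Proposition \ref{enlarge doubling} and \eqref{define doubling consequence}. The only cosmetic difference is that the paper derives $\rho_{\beta}(y)\asymp\rho_{\beta}(x)$ from Lemma \ref{compute distance} together with $d_{\e}(y)\asymp_{2}d_{\e}(x)$ on $B_{\e}(x,r)$, whereas you obtain it from the Harnack inequality \eqref{Harnack} on the outer $X$-ball; both are valid and yield constants depending only on the data.
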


\begin{proof}
By Lemma \ref{compute distance} we have for all $y \in B_{\e}(x,r)$, 
\begin{equation}\label{comparison chain}
\rho_{\beta}(y) = \rho_{\e}(y)^{\beta/\e} \asymp d_{\e}(y)^{\beta/\e} \asymp d_{\e}(x)^{\beta/\e} \asymp \rho_{\beta}(x),
\end{equation}
with the comparison $d_{\e}(y) \asymp_{2} d_{\e}(x)$ following from the condition on $r$. Applying Lemma \ref{sub inclusion} and the chain of comparisons \eqref{comparison chain}, we conclude that
\[
\mu_{\beta}(B_{\e}(x,r)) \asymp \rho_{\beta }(x)\mu(B_{\e}(x,r)) \lesssim \rho_{\beta }(x) \mu\left(B_{X}\left(x,\frac{C_{*}r}{\rho_{\e}(x)}\right)\right),
\]
with $C_{*} = C_{*}(\delta,K,\e,M)$ being the constant from Lemma \ref{sub inclusion}. A similar argument using the other inclusion from Lemma \ref{sub inclusion} shows that 
\[
\mu_{\beta}(B_{\e}(x,r)) \gtrsim \rho_{\beta}(x) \mu\left(B_{X}\left(x,\frac{C_{*}^{-1}r}{\rho_{\e}(x)}\right)\right).
\]
We thus conclude that
\begin{equation}\label{proto sub measure}
\rho_{\beta}(x) \mu\left(B_{X}\left(x,\frac{C_{*}^{-1}r}{\rho_{\e}(x)}\right)\right)\lesssim \mu_{\beta}(B_{\e}(x,r)) \lesssim \rho_{\beta}(x) \mu\left(B_{X}\left(x,\frac{C_{*}r}{\rho_{\e}(x)}\right)\right)
\end{equation}
The condition on $r$ implies that 
\begin{equation}\label{ratio inequality}
\frac{r}{\rho_{\e}(x)} \leq \frac{1}{2}\frac{d_{\e}(x)}{\rho_{\e}(x)} \leq C,
\end{equation}
with $C$ depending only on the uniformization data by Lemma \ref{compute distance}. By Proposition \ref{enlarge doubling} we can, at the cost of increasing the local doubling constant $C_{\mu}$ of $\mu$ by an amount depending only on the data, assume that $R_{0} > CC_{*}$ for the constant $C$ in inequality \eqref{ratio inequality} and the constant $C_{*}$ in Lemma \ref{sub inclusion}. Then the comparison \eqref{define doubling consequence} allows us to conclude that
\[ 
 \mu\left(B_{X}\left(x,\frac{C_{*}^{-1}r}{\rho_{\e}(x)}\right)\right) \asymp \mu\left(B_{X}\left(x,\frac{r}{\rho_{\e}(x)}\right)\right) \asymp \mu\left(B_{X}\left(x,\frac{C_{*}r}{\rho_{\e}(x)}\right)\right).
\]
Combining this comparison with inequality \eqref{proto sub measure} proves the lemma. 
\end{proof}

By combining Lemma \ref{sub measure} with Lemma \ref{choosing center} we obtain the following estimate for $\mu_{\beta}(B_{\e}(x,r))$ when $0 < r \leq \frac{1}{2}d_{\e}(x)$. We recall that $\kappa_{1} = \kappa_{0}/10$, where $\kappa_{0}$ is defined as in Lemma \ref{choosing center}. 

\begin{lem}\label{choosing center measure}
Let $x \in X$ and $0 < r \leq \frac{1}{2}d_{\e}(x)$. Let $z \in X$ be given such that $B_{\e}(z,\kappa_{1} r) \subset B_{\e}(x,r)$ and $d_{\e}(z) \geq 2\kappa_{1} r$. Then
\[
\mu_{\beta}(B_{\e}(x,r)) \asymp \mu_{\beta}(B_{\e}(z,\kappa_{1} r)),
\]
with comparison constants depending only on the data. 
\end{lem}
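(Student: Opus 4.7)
The plan is to apply Lemma \ref{sub measure} to each of the two balls $B_\e(x,r)$ and $B_\e(z, \kappa_1 r)$ and then reduce the comparison of the resulting expressions to a statement about doubling of $\mu$ on $X$ at a uniformly bounded scale. Note that the hypothesis $0 < r \leq \tfrac{1}{2}d_\e(x)$ makes $B_\e(x,r)$ a subWhitney ball, and the hypotheses $B_\e(z,\kappa_1 r) \subset B_\e(x,r)$ together with $d_\e(z) \geq 2\kappa_1 r$ say precisely that $B_\e(z,\kappa_1 r)$ is also a subWhitney ball, so Lemma \ref{sub measure} applies to both and yields
\[
\mu_{\beta}(B_{\e}(x,r)) \asymp \rho_{\beta}(x)\,\mu\!\left(B_X\!\left(x,\tfrac{r}{\rho_\e(x)}\right)\right), \qquad \mu_{\beta}(B_{\e}(z,\kappa_1 r)) \asymp \rho_{\beta}(z)\,\mu\!\left(B_X\!\left(z,\tfrac{\kappa_1 r}{\rho_\e(z)}\right)\right).
\]

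Next I would compare the two densities. Since $z \in B_\e(z, \kappa_1 r) \subset B_\e(x,r)$, we have $d_\e(x,z) < r \leq \tfrac{1}{2}d_\e(x)$, so the right-hand inclusion in Lemma \ref{sub inclusion} gives $|xz| \leq C_* r/\rho_\e(x)$, and using Lemma \ref{compute distance} to estimate $r/\rho_\e(x) \leq \tfrac{1}{2} d_\e(x)/\rho_\e(x) \leq C$ one gets $|xz| \leq C'$ for a constant $C'$ depending only on the uniformization data. Harnack \eqref{Harnack} then gives $\rho_\e(x) \asymp \rho_\e(z)$, hence $\rho_\beta(x) \asymp \rho_\beta(z)$, with comparison constants depending only on the uniformization data.

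It remains to compare the two $\mu$-balls. Since $\rho_\e(x) \asymp \rho_\e(z)$, the radii $r/\rho_\e(x)$ and $\kappa_1 r/\rho_\e(z)$ are comparable up to a constant depending on the data, and from $B_X(x, r/\rho_\e(x)) \subset B_X(z, r/\rho_\e(x) + |xz|)$ together with $|xz|$ and $r/\rho_\e(x)$ both bounded by a constant depending only on the uniformization data, we see that $B_X(x, r/\rho_\e(x))$ sits inside $B_X(z, C'' r/\rho_\e(z))$ for some constant $C''$ of the same sort, while $B_X(z, \kappa_1 r/\rho_\e(z))$ is trivially contained in it. Applying Proposition \ref{enlarge doubling} to enlarge $R_0$ by an amount depending only on the uniformization data, we may assume $\mu$ is doubling on balls of radius at most $C''C$ (with $C$ as above), and then the doubling comparison \eqref{define doubling consequence} yields $\mu(B_X(x, r/\rho_\e(x))) \asymp \mu(B_X(z, \kappa_1 r/\rho_\e(z)))$ with the desired constants.

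The only mildly delicate point is the bookkeeping of radii: one must verify that all $X$-scales that appear are uniformly bounded so that a single application of Proposition \ref{enlarge doubling} suffices to place everything in the doubling regime of $\mu$. This is handled by Lemma \ref{compute distance} once the condition $r \leq \tfrac{1}{2}d_\e(x)$ is combined with $\rho_\e(x) \asymp \rho_\e(z)$; after that the argument is a direct chain of comparisons.
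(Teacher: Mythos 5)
Your proof is correct and follows essentially the same route as the paper's: apply Lemma \ref{sub measure} to both subWhitney balls, bound $|xz|$ by a uniform constant via Lemmas \ref{sub inclusion} and \ref{compute distance} to get $\rho_{\e}(x)\asymp\rho_{\e}(z)$ (the paper goes through $d_{\e}(z)\asymp_{2}d_{\e}(x)$ and Lemma \ref{compute distance} rather than Harnack, an immaterial difference), and then compare the two $\mu$-balls using Proposition \ref{enlarge doubling} and doubling at a uniformly bounded scale. The only thing to make explicit is the reverse inclusion $B_{X}(z,\kappa_{1}r/\rho_{\e}(z))\subset B_{X}(x,C'''r/\rho_{\e}(x))$ needed for the other direction of the final comparison, which follows symmetrically from the same bound on $|xz|$.
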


\begin{proof}
By Lemmas \ref{compute distance} and \ref{sub inclusion} we have 
\begin{equation}\label{pre radius control}
|xz| \leq \frac{C_{*}r}{\rho_{\e}(x)} \leq \frac{C_{*}d_{\e}(x)}{2\rho_{\e}(x)} \lesssim 1,
\end{equation}
with implied constant depending only on the uniformization data, where $C_{*}$ is the constant from Lemma \ref{sub inclusion}. Since $z \in B_{\e}(x,r)$ and $r \leq \frac{1}{2}d_{\e}(x)$, we conclude that we have $d_{\e}(z) \asymp_{2} d_{\e}(x)$. We thus obtain from Lemma \ref{compute distance} that $\rho_{\e}(z) \asymp \rho_{\e}(x)$ with comparison constant depending only on the uniformization data. Since $d_{\e}(z) \geq 2\kappa_{1} r$, we have by Lemma \ref{compute distance} that
\begin{equation}\label{radius control}
1 \gtrsim \frac{\kappa_{1} r}{\rho_{\e}(z)} \asymp \frac{r}{\rho_{\e}(z)} \asymp \frac{r}{\rho_{\e}(x)} \asymp \frac{C_{*}r}{\rho_{\e}(x)},
\end{equation}
with all implied constants depending only on the uniformization data, since $\kappa_{1}$ depends only on the uniformization data. We can thus apply Proposition \ref{enlarge doubling} to conclude that we can assume that $\mu$ is doubling on balls of radius at most any of the terms appearing in \eqref{radius control}, at the cost of increasing the local doubling constant of $\mu$ by an amount depending only on the data. It follows that
\begin{align*}
\mu\left(B_{X}\left(z,\frac{\kappa_{1} r}{\rho_{\e}(z)}\right)\right) &\asymp \mu\left(B_{X}\left(z,\frac{r}{\rho_{\e}(z)}\right)\right) \\
&\asymp \mu\left(B_{X}\left(z,\frac{r}{\rho_{\e}(x)}\right)\right)\\
&\asymp \mu\left(B_{X}\left(x,\frac{C_{*}r}{\rho_{\e}(x)}\right)\right) \\
&\asymp \mu\left(B_{X}\left(x,\frac{r}{\rho_{\e}(x)}\right)\right)
\end{align*}
with implied constants depending only on the data. The third comparison above follows from the fact that $z \in B_{X}\left(x,\frac{C_{*}r}{\rho_{\e}(x)}\right)$ by \eqref{pre radius control}. Since the comparison $\rho_{\beta}(z) \asymp \rho_{\beta}(x)$ follows from the comparison $\rho_{\e}(z) \asymp \rho_{\e}(x)$ (with comparison constants depending only on the uniformization data), applying Lemma \ref{sub measure} to $B_{\e}(z,\kappa r)$ and $B_{\e}(x,r)$ (note that $\kappa_{1} r \leq \frac{1}{2}d_{\e}(z)$ by assumption)  then gives 
\[
\mu_{\beta}(B_{\e}(z,\kappa_{1} r)) \asymp \mu_{\beta}(B_{\e}(x,r)),
\]
with comparison constants depending only on the data.
\end{proof}

Our final lemma estimates the right side of inequality \eqref{controlled upper} in terms of $\mu_{\beta}(B_{\e}(z,\kappa_{1}r))$. The reason for choosing $5r$ as the upper bound for $d_{\e}(z)$ will be clear in the proof of Proposition \ref{global doubling}. 

\begin{lem}\label{estimate upper}
Let $z \in X$ and $r > 0$  be such that $2\kappa_{1} r \leq d_{\e}(z) < 5r$. Then
\begin{equation}\label{comparison upper}
\mu_{\beta}(B_{\e}(z,\kappa_{1} r)) \asymp r^{\beta/\e}\mu(B_{X}(z,R_{0})),
\end{equation}
with comparison constant depending only on the data. 
\end{lem}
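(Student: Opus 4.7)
The plan is to reduce to the subWhitney measure estimate of Lemma \ref{sub measure} and then trade the resulting variable radius for the fixed radius $R_0$ by means of Proposition \ref{enlarge doubling}. The key observation is that the two-sided bound $d_\e(z) \asymp r$ built into the hypothesis forces $\rho_\e(z) \asymp r$, which simultaneously produces the $r^{\beta/\e}$ factor on the right side of \eqref{comparison upper} and reduces the Euclidean-side ball to one of radius of order one.

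Concretely, I would first apply Lemma \ref{compute distance} together with $2\kappa_1 r \leq d_\e(z) < 5r$ to obtain $\rho_\e(z) \asymp d_\e(z) \asymp r$, with comparison constants depending only on the uniformization data. Raising to the $\beta/\e$ power yields $\rho_\beta(z) \asymp r^{\beta/\e}$. Since $\kappa_1 r \leq \tfrac12 d_\e(z)$, the ball $B_\e(z,\kappa_1 r)$ is a subWhitney ball, so Lemma \ref{sub measure} gives
\[
\mu_\beta(B_\e(z,\kappa_1 r)) \asymp \rho_\beta(z)\, \mu\!\left(B_X\!\left(z, \tfrac{\kappa_1 r}{\rho_\e(z)}\right)\right) \asymp r^{\beta/\e}\, \mu\!\left(B_X\!\left(z, \tfrac{\kappa_1 r}{\rho_\e(z)}\right)\right).
\]
The radius $\kappa_1 r / \rho_\e(z)$ is bounded above and below by constants depending only on the uniformization data (since $\kappa_1$ has this dependence and $\rho_\e(z) \asymp r$). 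I would then invoke Proposition \ref{enlarge doubling} to enlarge $R_0$, and correspondingly $C_\mu$, by an amount depending only on the data, so that $\mu$ is doubling on balls of radius up to the maximum of $R_0$ and $\kappa_1 r/\rho_\e(z)$; the comparison \eqref{define doubling consequence}, whose constants depend only on the data, then yields $\mu(B_X(z,\kappa_1 r/\rho_\e(z))) \asymp \mu(B_X(z,R_0))$, completing the argument.

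The main obstacle is essentially bookkeeping rather than substance: one must carefully track which of the comparisons is controlled purely by the uniformization data and which requires the full data, and verify that the enlargement of $R_0$ via Proposition \ref{enlarge doubling} costs only constants of the permitted form. Once one sees that the hypothesis $d_\e(z) \asymp r$ collapses the two relevant scales (the $d_\e$-scale $r$ and the Whitney-type scale $r/\rho_\e(z)$), the estimate is a direct assembly of the earlier results of this section.
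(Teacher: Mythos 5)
Your proposal is correct and follows essentially the same route as the paper's proof: both use Lemma \ref{compute distance} to get $\rho_{\e}(z) \asymp d_{\e}(z) \asymp r$, apply Lemma \ref{sub measure} to the subWhitney ball $B_{\e}(z,\kappa_{1}r)$, and then use the two-sided bound $\kappa_{1}r/\rho_{\e}(z) \asymp 1$ together with Proposition \ref{enlarge doubling} to replace the variable radius by $R_{0}$. The bookkeeping of which constants depend on the uniformization data versus the full data is also handled exactly as in the paper.
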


\begin{proof}
The assumptions imply that $d_{\e}(z) \asymp r$, hence $\rho_{\beta}(z) \asymp r^{\beta/\e}$ by Lemma \ref{compute distance}, with comparison constants depending only on the uniformization data since $\kappa_{1}$ depends only on the uniformization data. Thus by Lemma \ref{sub measure} we have
\[
\mu_{\beta}(B_{\e}(z,\kappa_{1} r)) \asymp r^{\beta/\e}\mu\left(B_{X}\left(z, \frac{\kappa_{1} r}{\rho_{\e}(z)}\right)\right),
\]
with comparison constant depending only on the data. Since $\rho_{\e}(z) \asymp d_{\e}(z) \asymp r$, we have $\frac{\kappa_{1} r}{\rho_{\e}(z)} \asymp_{C'} 1$ for a constant $C'$ depending only on the uniformization data. Using Proposition \ref{enlarge doubling} we can assume that $\mu$ is doubling on balls of radius at most $C'R_{0}$, at the cost of increasing the doubling constant by an amount depending only on the data. From this we conclude that the comparison \eqref{comparison upper} holds. 
\end{proof}

We can now prove Proposition \ref{global doubling}.

\begin{proof}[Proof of Proposition \ref{global doubling}]
We split the proof of the doubling property for $\mu_{\beta}$ into two cases depending on the center $x \in \bar{X}_{\e}$ of the ball. The first case is that in which $0 < r \leq \frac{1}{4}d_{\e}(x)$, which implies in particular that $x \in X_{\e}$. Then we can apply Lemma \ref{sub measure} to both $B_{\e}(x,r)$ and $B_{\e}(x,2r)$. We conclude that
\begin{equation}\label{case one}
\mu_{\beta}(B_{\e}(x,r)) \asymp \mu\left(B_{X}\left(x, \frac{r}{\rho_{\e}(x)}\right)\right) \asymp \mu\left(B_{X}\left(x, \frac{2r}{\rho_{\e}(x)}\right)\right) \asymp \mu_{\beta}(B_{\e}(x,2r)),
\end{equation}
with comparison constants depending only on the data. To justify the middle comparison in \eqref{case one}, we observe that since $2r \leq \frac{1}{2}d_{\e}(x)$ we have by Lemma \ref{compute distance} that each of the middle two balls in $X$ in \eqref{case one} on the right side of this inequality have radius at most $C'$ for some constant $C'$ depending only on the uniformization data. By Proposition \ref{enlarge doubling} we can assume that $\mu$ is doubling on balls of radius at most $C'$, at the cost of increasing the doubling constant of $\mu$ by an amount depending only on the data. This gives the desired doubling estimate for the right side of \eqref{case one}. We note that this first case does not require the use of the assumed inequality \eqref{controlled upper}.

The second case is that in which $d_{\e}(x) < 4r$. We can then find a point $\xi \in \p X_{\e}$ such that $B_{\e}(x,r) \subset B_{\e}(\xi,5r)$. We then use Lemma \ref{choosing center} to choose a point $z \in X_{\e}$ such that $B_{\e}(z,\kappa_{0}r) \subset B_{\e}(x,r)$ and $d_{\e}(z) \geq 2\kappa_{0}r$. Then we must have $d_{\e}(z) < 5r$ since $z \in B_{\e}(\xi,5r)$. Since $B_{\e}(z,\kappa_{0}r) \subset B_{\e}(\xi,5r)$ and $\kappa_{0} > 5\kappa_{1}$, we conclude from Lemma \ref{estimate upper} and the assumed inequality \eqref{controlled upper} that 
\begin{equation}\label{case two a}
\mu_{\beta}(B_{\e}(x,r)) \asymp \mu_{\beta}(B_{\e}(z,\kappa_{1} r)),
\end{equation}
with comparison constant depending only on the data and $C_{0}$. Since we also have $B_{\e}(x,2r) \subset B_{\e}(\xi,10r)$ and $\kappa_{0} = 10\kappa_{1}$, the same combination of Lemma \ref{estimate upper} and \eqref{controlled upper} also shows that
\begin{equation}\label{case two b}
\mu_{\beta}(B_{\e}(x,2r)) \asymp \mu_{\beta}(B_{\e}(z,\kappa_{1} r)),
\end{equation}
with comparison constant depending only on the data and $C_{0}$. Combining \eqref{case two a} and \eqref{case two b} gives the desired doubling estimate in this second case.
\end{proof}




We will now prove Theorem \ref{doubling threshold} by showing, in analogy to \cite[Proposition 4.7]{BBS20}, that $\mu_{\beta}$ is always doubling on $\bar{X}_{\e}$ for $\beta$ sufficiently large. We will need the following refinement of Proposition \ref{enlarge doubling}.

\begin{lem}\label{enlarge doubling refined}\cite[Lemma 3.5]{BBS20} Let $(X,d)$ be a geodesic metric space and let $\mu$ be a measure on $X$ that is doubling on balls of radius at most $R_{0}$ with constant $C_{\mu}$. Let $n \in \N$ be a given integer. 
\begin{enumerate}
\item For $x,y \in X$ and $0 < r \leq R_{0}$ satisfying $d(x,y) < nr$, we have
\[
\mu(B_{X}(x,r)) \leq C_{\mu}^{n}\mu(B_{X}(y,r)). 
\]
\item For $0 < r \leq \frac{1}{4}R_{0}$, every ball $B \subset X$ of radius $nr$ can be covered by at most $C^{7(n+4)/6}_{\mu}$ balls of radius $r$.
\end{enumerate}  
\end{lem}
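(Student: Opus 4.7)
For part (1), the plan is a standard chaining along a geodesic. Since $X$ is geodesic and $d(x,y) < nr$, I would fix a geodesic $\gamma$ joining $x$ to $y$ of length less than $nr$ and sample points $x = z_0, z_1, \ldots, z_k = y$ along $\gamma$ with $k \leq n$ and consecutive distances at most $r$. For each index, the inclusion $B_X(z_i, r) \subset B_X(z_{i+1}, 2r)$ combined with doubling at radius $r \leq R_0$ yields $\mu(B_X(z_i, r)) \leq C_\mu \mu(B_X(z_{i+1}, r))$, and iterating through the chain produces $\mu(B_X(x,r)) \leq C_\mu^n \mu(B_X(y,r))$, which is the claim.

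For part (2), the plan is a Vitali-style packing argument that feeds on part (1). Let $B = B_X(x_0, nr)$ and choose a maximal $r$-separated subset $\{y_1, \ldots, y_N\} \subset B$. Maximality forces $\{B_X(y_j, r)\}_{j=1}^N$ to cover $B$, while separation forces $\{B_X(y_j, r/2)\}_{j=1}^N$ to be pairwise disjoint and contained in $B_X(x_0, (n+\tfrac12)r)$. This produces the packing inequality
\[
N \cdot \min_j \mu\bigl(B_X(y_j, r/2)\bigr) \leq \mu\bigl(B_X(x_0, (n+\tfrac{1}{2})r)\bigr).
\]
I would then invoke part (1) at radius $r/2$ (permissible since $r/2 \leq R_0$) with $d(x_0, y_j) < (2n)(r/2)$ to bound each summand on the left from below by $C_\mu^{-2n}\mu(B_X(x_0, r/2))$. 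This reduces the problem to comparing $\mu(B_X(x_0, (n+\tfrac12)r))$ with $\mu(B_X(x_0, r/2))$ at a common center, which I would handle by iterating the doubling inequality up from radius $r/2$, exploiting the hypothesis $r \leq R_0/4$ to keep several doublings inside the local regime, and patching the remaining large-radius range by a covering of $B_X(x_0, (n+\tfrac12)r)$ by balls of radius comparable to $R_0$.

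The main obstacle will be producing the precise exponent $7(n+4)/6$ rather than a crude $C_\mu^{O(n)}$. The sketch above gives the qualitatively correct shape immediately, but the sharp constant requires a careful choice of the separation parameter (presumably $\alpha r$ for a specific $\alpha \in (0,1)$, rather than the naive choice $\alpha = 1$) together with a tight handling of the interplay between balls of radius $\leq R_0$ and larger balls. Since part (2) is essentially a quantitative restatement of part (1), I expect this to be bookkeeping-heavy but not conceptually novel beyond the two ingredients above.
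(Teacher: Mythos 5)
First, a caveat: the paper offers no proof of this lemma---it is imported verbatim from \cite[Lemma 3.5]{BBS20}---so there is no internal argument to compare against and I am judging your proposal on its own terms. Your proof of part (1) is correct and complete: chaining $n$ steps of length $<r$ along a geodesic, using $B_{X}(z_{i},r)\subset B_{X}(z_{i+1},2r)$ together with one application of doubling at radius $r\le R_{0}$ per step, is exactly the standard argument.

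Part (2), however, has a genuine gap at the step you describe as ``patching the remaining large-radius range by a covering of $B_{X}(x_{0},(n+\tfrac12)r)$ by balls of radius comparable to $R_{0}$.'' Bounding the number of balls of radius $\approx R_{0}$ needed to cover a ball of radius $(n+\tfrac12)r$ (which can be as large as $\approx nR_{0}/4$) is itself an instance of part (2), so the argument as written is circular. The comparison $\mu(B_{X}(x_{0},(n+\tfrac12)r))\le C_{\mu}^{O(n)}\mu(B_{X}(x_{0},r/2))$ across scales exceeding $R_{0}$ is precisely the hard content of the lemma in the locally doubling setting: iterated doubling is unavailable once the radius passes $R_{0}$, and the circularity must be broken by a genuinely different device (for instance an induction on $n$, or an annulus-by-annulus chaining that only ever doubles balls of radius at most $R_{0}$), none of which appears in your sketch. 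A secondary point: even if that step were repaired, your route cannot recover the stated constant, because the single application of part (1) with $d(x_{0},y_{j})<2n\cdot(r/2)$ already costs a factor $C_{\mu}^{2n}$, whereas the claimed bound is $C_{\mu}^{7n/6+14/3}$; the coefficient of $n$ is wrong from the outset, so this is not merely bookkeeping. For what it is worth, the only feature of the lemma used in this paper (in the proof of Theorem \ref{doubling threshold}) is a covering bound of the form $C_{\mu}^{an+b}$ with $a,b$ absolute and independent of $\beta$, so the precise exponent $7/6$ is inessential to the application---but your argument does not yet establish even that weaker form.
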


\begin{proof}[Proof of Theorem \ref{doubling threshold}]
We will prove this theorem using the criterion of Proposition \ref{global doubling}. All implied constants will depend only on the data, meaning only on the parameters $\delta$, $K$, $\e$, $M$, $\beta$, $R_{0}$ and $C_{\mu}$, unless otherwise noted.  Let $\xi \in \p X_{\e}$, $z \in X_{\e}$, and $r > 0$ be given such that $B_{\e}(z,\kappa_{1} r) \subset B_{\e}(\xi,r)$ and $d_{\e}(z) \geq 2\kappa_{1} r$, where we recall that $\kappa_{1} = \kappa_{0}/10$ depends only on the uniformization data. We then have $\rho_{\beta}(z) \asymp r^{\beta/\e}$ by the proof of Lemma \ref{estimate upper}. We define for $n \geq 1$, 
\[
A_{n} = \{x \in B_{\e}(\xi,r) \cap X_{\e}: e^{-\e n}r \leq d_{\e}(x) < e^{-\e(n-1)}r\}.
\]
Since $x \in B_{\e}(\xi,r)$ implies that $d_{\e}(x) < r$, we have $B_{\e}(\xi,r) \cap X_{\e} = \bigcup_{n=1}^{\infty} A_{n}$. Since $\mu_{\beta}$ is extended to $\p X_{\e}$ by setting $\mu_{\beta}(\p X_{\e}) = 0$, we conclude that
\[
\mu_{\beta}(B_{\e}(\xi,r)) = \sum_{n=1}^{\infty} \mu_{\beta}(A_{n}). 
\]

For any given $x \in A_{n}$ we either have $|xz| < 1$ or $|xz| \geq 1$. In the second case we use Lemma \ref{lem:estimate both} to obtain
\begin{align*}
e^{\e | xz|} &= \frac{e^{-2\e(x|z)_{b}}}{\rho_{\e}(x)\rho_{\e}(z)} \\
&\asymp \frac{d_{\e}(x,z)^{2}}{d_{\e}(x)d_{\e}(z)} \\
&\leq \frac{(d_{\e}(x,\xi) + d_{\e}(\xi,z))^{2}}{2\kappa_{1} e^{-\e n}r^{2}} \\
&\leq \frac{2e^{\e n}}{\kappa_{1}} \\
&\ls e^{\e n},
\end{align*}
with implied constant depending only on $\delta$, $K$, $\e$, and $M$.  We then conclude that $ |xz| \leq n + c_{0}$, with $c_{0}=c_{0}(\delta,K,\e,M) \geq 0$. Since this inequality trivially holds with $c_{0} = 0$ when $|xz| < 1$, we in fact obtain the inequality  $|xz| \leq n + c_{0}$ in both cases. We then choose $n_{0} = n_{0}(\delta,K,\e,M)$ to be the minimal integer such that $n_{0} \geq c_{0}$. Then for $n \geq n_{0}$ we have $|xz| \leq 2n$ for $x \in A_{n}$. For $1 \leq n \leq n_{0}$ we then have $|xz| \leq 2n_{0}$ and therefore $A_{n} \subset B_{X}(z,2n_{0})$. By Proposition \ref{enlarge doubling} we can then assume that $\mu$ is doubling on balls of radius at most $2n_{0}$ in $X$, at the cost of increasing the doubling constant by an amount depending only on $\delta$, $K$, $\e$, and $M$. We conclude that
\[
\mu(B_{X}(z,2n_{0})) \asymp \mu(B_{X}(z,R_{0})),
\]
with comparison constant depending only on the data. By the Harnack inequality \eqref{Harnack} for $\rho_{\beta}$ we conclude for $x \in B_{X}(z,2n_{0})$ that $\rho_{\beta}(x) \asymp \rho_{\beta}(z) \asymp r^{\beta/\e}$. Putting all of this together, we conclude that
\[
\mu_{\beta}\left(\bigcup_{n=1}^{n_{0}}A_{n}\right) \leq \mu_{\beta}(B_{X}(z,2n_{0})) \ls r^{\beta/\e}\mu(B_{X}(z,R_{0})).
\]

We now consider the case $n > n_{0}$, for which we have $|xz| \leq 2n$ whenever $x \in A_{n}$. We apply Proposition \ref{enlarge doubling} to ensure that $\mu$ is doubling on balls of radius at most $R_{1} = \max\{4R_{0},R_{0}+2\}$. The doubling constant $C'_{\mu}$ for $\mu$ on balls of radius at most $R_{1}$ then depends only on $R_{0}$ and $C_{\mu}$. In particular $C'_{\mu}$ does not depend on $\beta$. Applying (2) of Lemma \ref{enlarge doubling refined}, we cover $A_{n} \subset B_{X}(z,2n)$ with $N_{n} \lesssim e^{\alpha n} $ many balls $B_{n,j}$ of radius $R_{0}$, where 
\[
\alpha = \alpha(R_{0},C_{\mu}) = \frac{7}{6}\log C_{\mu}'.
\]
We set $\beta_{0}:=3\alpha$ and assume that $\beta \geq \beta_{0}$. Note that $\beta_{0} = \beta_{0}(R_{0},C_{\mu})$ depends only on $R_{0}$ and $C_{\mu}$. 

We can clearly assume that each ball $B_{n,j}$ intersects $A_{n}$, from which we conclude that the centers $x_{n,j}$ of the balls $B_{n,j}$ satisfy 
\[
|x_{n,j}z| \leq R_{0} + 2n < R_{1}n,
\]
since $n \geq 1$ and $R_{1} > R_{0}+2$. Applying (1) of Lemma \ref{enlarge doubling refined} then gives that
\[
\mu(B_{n,j}) \leq (C_{\mu}')^{n}\mu(B_{X}(z,R_{1})) \leq e^{\alpha n}\mu(B_{X}(z,R_{1})),
\]
For $x \in A_{n}$ we have, 
\begin{equation}\label{annulus comparison}
\rho_{\beta}(x) \asymp d_{\e}(x)^{\beta/\e} \asymp (e^{-\e n}r)^{\beta/\e}.
\end{equation}
The Harnack inequality \eqref{Harnack} implies that $\rho_{\beta}(y) \asymp \rho_{\beta}(x_{n,j})$ for each $y \in B_{n,j}$ (since each ball $B_{n,j}$ has radius $R_{0}$). Furthermore, since there is some point $y\in A_{n}$ such that $|x_{n,j}y| \leq R_{0}$, it follows from the comparison \eqref{annulus comparison} that $\rho_{\beta}(x_{n,j}) \asymp (e^{-\e n}r)^{\beta/\e}$. Thus we conclude that
\begin{align*}
\mu_{\beta}(B_{n,j}) &\asymp \rho_{\beta}(x_{n,j}) \mu(B_{n,j}) \\
&\lesssim (e^{-\e n} r)^{\beta/\e}\mu(B_{n,j}) \\
&\leq e^{-\beta n} r^{\beta/\e} e^{\alpha n}\mu(B_{X}(z,R_{1})).
\end{align*}
By our restriction $\beta \geq \beta_{0} = 3\alpha$, we conclude that
\[
\mu_{\beta}(B_{n,j}) \ls e^{-2\alpha n}r^{\beta/\e} \mu(B_{X}(z,R_{1})).
\]
It then follows from this inequality and the bound $N_{n} \ls e^{\alpha n}$ that  
\begin{align*}
\mu_{\beta}\left(\bigcup_{n = n_{0}+1}^{\infty}A_{n}\right) &\leq \sum_{n=n_{0}+1}^{\infty} \sum_{j=1}^{N_{n}} \mu_{\beta}(B_{n,j}) \\
&\lesssim r^{\beta/\e} \mu(B_{X}(z,R_{1})) \sum_{n=n_{0}+1}^{\infty} N_{n} e^{- 2\alpha n} \\ 
&\ls r^{\beta/\e} \mu(B_{X}(z,R_{1})) \sum_{n=n_{0}+1}^{\infty} e^{- \alpha n} \\
&\ls r^{\beta/\e}\mu(B_{X}(z,R_{1})),
\end{align*}
with the final inequality following by summing the geometric series. By combining the cases $1 \leq n \leq n_{0}$ and $n > n_{0}$ we conclude that
\[
\mu_{\beta}(B_{\e}(\xi,r)) \ls r^{\beta/\e}\mu(B_{X}(z,R_{1})).
\]
Since $\mu$ is doubling up to the radius $R_{1}$ with doubling constant depending only on the data, we conclude by Proposition \ref{global doubling} that $\mu_{\beta}$ is doubling on $\bar{X}_{\e}$ with constant depending only on the data. 
\end{proof}

We now discuss a setting in which it is possible to obtain sharper estimates for the threshold $\beta_{0}$ above which $\mu_{\beta}$ is doubling. In particular this will allow us to prove the doubling claim in Theorem \ref{Riem doubling}. We will keep the setting of Theorem \ref{doubling threshold} and then assume in addition that we have a cocompact discrete isometric action of a group $\Gamma$ on $X$. Briefly recalling the definitions, the action by $\Gamma$ is \emph{isometric} if each element $g \in \Gamma$ defines an isometry of $X$. It is \emph{cocompact} if there is a compact set $E \subset X$ such that $X = \bigcup_{g \in \Gamma} g(E)$, i.e., if $X$ is  covered by the translates of a compact subset under the action of $\Gamma$. It is \emph{discrete} if for each compact subset $E \subset X$ the number of $g \in \Gamma$ such that $g(E) \cap E \neq \emptyset$ is finite. We will assume in addition that the uniformly locally doubling measure $\mu$ is $\Gamma$-invariant, meaning that $\mu(g^{-1}(E)) = \mu(E)$ for each measurable subset $E \subset X$ and each $g \in \Gamma$. Such measures often arise naturally in the context of the geometry of $X$; for instance if $X$ is a tree with bounded vertex degree and edges of unit length then we can take $\mu$ to be the measure on $X$ induced from the $1$-dimensional Lebesgue measure on the edges. Another case is the setting of Theorem \ref{Riem doubling} when $X$ is the universal cover of a closed Riemannian manifold $M$ with sectional curvatures $\leq -1$, in which case we can take $\mu$ to be the Riemannian volume on $X$. We can assume by Proposition \ref{enlarge doubling} and the cocompactness of the action of $\Gamma$ that the doubling radius $R_{0}$ for $\mu$ is large enough that for each $x \in X$ the translates of $B_{X}(x,R_{0})$ by $\Gamma$ cover $X$. 

For $x \in X$ and $R > 0$ we set
\[
N_{\Gamma}(x,R) = \#\{g \in \Gamma: |xg(x)| \leq R\},
\]
with $\# E$ denoting the cardinality of a set $E$. We consider the critical exponent $h_{X}$ defined by the following limit for a fixed $x \in X$, 
\begin{equation}\label{crit limit}
h_{X} = \limsup_{R \rightarrow \infty} \frac{\log N_{\Gamma}(x,R)}{R}. 
\end{equation}
Standard arguments using the cocompactness of the action of $\Gamma$ show that $h_{X}$ does not depend on the choice of point $x \in X$. We observe that $h_{X}$ can equivalently be thought of as the limit 
\begin{equation}\label{volume limit}
h_{X} = \limsup_{R \rightarrow \infty} \frac{\log \mu(B_{X}(x,R))}{R}, 
\end{equation}
by observing that the translates $g(B_{X}(x,R_{0}))$ for $g \in \Gamma$ will cover  $X$ with bounded overlap by the uniformly local doubling property of $\mu$ and the discreteness of the action of $\Gamma$; for this we can always enlarge the doubling radius to $2R_{0}$ using Proposition \ref{enlarge doubling} to obtain the bounded overlap property. Consequently $\mu(B_{X}(x,R))$ will be comparable to $N_{\Gamma}(x,R)$ when $R$ is large, which shows that the limits \eqref{crit limit} and \eqref{volume limit} are the same. The volume growth entropy \eqref{volume growth} considered in Theorem \ref{Riem doubling} is a special case of the limit \eqref{volume limit}.

It's clear from applying (2) of Lemma \ref{enlarge doubling refined} to the limit \eqref{volume limit} that we have $h_{X} < \infty$. Thus for each $h > h_{X}$ and $x \in X$ we have a constant $C_{h,x} \geq 1$ such that for all $R > 0$, 
\begin{equation}\label{pre exponential}
N_{\Gamma}(x,R) \leq C_{h,x} e^{h R}. 
\end{equation}
The lemma below shows that we can take the constant $C_{h,x}$ to be independent of $x$.

\begin{lem}\label{independence}
For each $h > h_{X}$ there is a constant $C_{h}$ such that we have for all $x \in X$ and $R > 0$,
\begin{equation}\label{exponential crit}
N_{\Gamma}(x,R) \leq C_{h} e^{h R}. 
\end{equation}
\end{lem}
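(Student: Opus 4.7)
The plan is to reduce to the known bound \eqref{pre exponential} at a single basepoint via the cocompactness of the action of $\Gamma$. Fix once and for all a basepoint $x_{0} \in X$ and apply \eqref{pre exponential} to obtain a constant $C_{h,x_{0}}$ such that
\[
N_{\Gamma}(x_{0},R) \leq C_{h,x_{0}} e^{hR}
\]
for all $R > 0$. By cocompactness there is a compact $E \subset X$ with $X = \bigcup_{g \in \Gamma} g(E)$; enlarging $E$ if necessary we may assume $x_{0} \in E$, and we set $D = \diam(E)$. Then for every $x \in X$ there is some $g \in \Gamma$ with $g^{-1}(x) \in E$, hence $|x_{0}\,g^{-1}(x)| \leq D$, and since $g$ is an isometry this gives $|g(x_{0})\,x| \leq D$.

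The key step is a conjugation trick. For any $x \in X$ choose $g \in \Gamma$ as above. Because conjugation by $g$ is a bijection of $\Gamma$, the map $\gamma \mapsto g^{-1}\gamma g$ is an injection from $\{\gamma \in \Gamma : |x\,\gamma(x)| \leq R\}$ into $\Gamma$. For any $\gamma$ in the source, using that $\gamma$ is an isometry we estimate
\[
|x_{0}\,g^{-1}\gamma g(x_{0})| = |g(x_{0})\,\gamma g(x_{0})| \leq |g(x_{0})\,x| + |x\,\gamma(x)| + |\gamma(x)\,\gamma g(x_{0})| \leq R + 2D.
\]
Thus $g^{-1}\gamma g$ is counted by $N_{\Gamma}(x_{0}, R+2D)$, and injectivity gives $N_{\Gamma}(x,R) \leq N_{\Gamma}(x_{0}, R + 2D)$.

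Combining this with the bound at $x_{0}$ yields
\[
N_{\Gamma}(x,R) \leq C_{h,x_{0}} e^{h(R+2D)} = \bigl(C_{h,x_{0}} e^{2hD}\bigr) e^{hR},
\]
so setting $C_{h} = C_{h,x_{0}} e^{2hD}$ proves \eqref{exponential crit}. There is no real obstacle here; the only point to be careful about is ensuring that the same $D$ works uniformly for all $x \in X$, which is precisely what cocompactness provides.
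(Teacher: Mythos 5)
Your proof is correct and follows essentially the same route as the paper: both arguments combine the single-basepoint bound \eqref{pre exponential} with cocompactness and the conjugation invariance $N_{\Gamma}(g(z),R)=N_{\Gamma}(z,R)$ (which you make explicit via the injection $\gamma\mapsto g^{-1}\gamma g$), absorbing the resulting additive shift $2D$ into the constant. The only difference is organizational — the paper first bounds $N_{\Gamma}(y,R)$ for $y$ in a fixed ball around the basepoint and then conjugates, while you do both in one triangle-inequality estimate — and your version is, if anything, slightly cleaner.
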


\begin{proof}
Fix $x \in X$ and let $C_{h,x}$ be the constant in \eqref{pre exponential}. Recall that $R_{0} > 0$ was chosen such that the translates of $B_{X}(x,R_{0})$ by $\Gamma$ cover $X$. For each $y \in B_{X}(x,R_{0})$ we have 
\[
N_{\Gamma}(y,R) \leq N_{\Gamma}(x,R+R_{0}) \leq C_{h,x} e^{h(R+R_{0})}. 
\]
This implies that for $y \in B_{X}(x,R_{0})$ we can take $C_{h} = C_{h,x}e^{2hR_{0}}$. It then follows that if $y \in g(B_{X}(x,R_{0}))$ for some $g \in \Gamma$ then 
\[
N_{\Gamma}(y,R) = N_{\Gamma}(g^{-1}(y),R) \leq C_{h} e^{h R}. 
\]
\end{proof}




\begin{prop}\label{crit doubling}
For each $\beta > h_{X}$ the measure $\mu_{\beta}$ on $\bar{X}_{\e}$ is doubling with doubling constant $C_{\mu_{\beta}}$ depending only on the data and the constant $C_{h}$ in \eqref{exponential crit} with $h = (\beta+ h_{X})/2$. 
\end{prop}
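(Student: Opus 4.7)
The plan is to verify the hypothesis of Proposition \ref{global doubling} in essentially the same framework used to prove Theorem \ref{doubling threshold}, but with the key covering step upgraded using the $\Gamma$-invariance of $\mu$ and the exponential count \eqref{exponential crit}. So I fix $\xi \in \p X_{\e}$, $r > 0$, and $z \in X$ with $B_{\e}(z,\kappa_{1} r) \subset B_{\e}(\xi, r)$ and $d_{\e}(z) \geq 2\kappa_{1}r$, and aim to bound $\mu_{\beta}(B_{\e}(\xi,r))$ by a constant multiple of $r^{\beta/\e} \mu(B_{X}(z,R_{0}))$. As in the proof of Theorem \ref{doubling threshold}, I decompose $B_{\e}(\xi, r) \cap X_{\e} = \bigcup_{n \geq 1} A_{n}$ with $A_{n} = \{x : e^{-\e n}r \leq d_{\e}(x) < e^{-\e(n-1)}r\}$, use Lemma \ref{lem:estimate both} to conclude that $|xz| \leq n + c_{0}$ for $x \in A_{n}$ with $c_{0} = c_{0}(\delta,K,\e,M)$, and set $n_{0} = n_{0}(\delta,K,\e,M)$ to be the smallest integer exceeding $c_{0}$. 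The contribution from $1 \leq n \leq n_{0}$ is handled exactly as in Theorem \ref{doubling threshold}: $\bigcup_{n=1}^{n_{0}} A_{n} \subset B_{X}(z, 2n_{0})$, so after enlarging the doubling radius by an amount depending only on the uniformization data we obtain this part is bounded by $r^{\beta/\e}\mu(B_{X}(z, R_{0}))$ up to uniform constants.

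The crucial new ingredient enters for $n > n_{0}$, where $A_{n} \subset B_{X}(z, 2n) \subset B_{X}(z, n + c_{0})$. Instead of covering $A_{n}$ by balls of radius $R_{0}$ produced through part (2) of Lemma \ref{enlarge doubling refined} (which forced the threshold $3\alpha$ in Theorem \ref{doubling threshold}), I use that by our choice of $R_{0}$ the translates $\{g(B_{X}(z, R_{0}))\}_{g \in \Gamma}$ cover $X$. I take the subcollection $\Gamma_{n} = \{g \in \Gamma : g(B_{X}(z,R_{0})) \cap A_{n} \neq \emptyset\}$; each such $g$ satisfies $|z g(z)| \leq n + c_{0} + R_{0}$, so
\[
\# \Gamma_{n} \leq N_{\Gamma}(z, n + c_{0} + R_{0}) \leq C_{h}\, e^{h(c_{0}+R_{0})}\, e^{h n}
\]
by Lemma \ref{independence}, where I fix $h = (\beta + h_{X})/2$, which satisfies $h_{X} < h < \beta$.

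Having fixed the cover, two comparisons finish the bound on $\mu_{\beta}(A_{n})$. First, by $\Gamma$-invariance $\mu(g(B_{X}(z, R_{0}))) = \mu(B_{X}(z, R_{0}))$ for every $g \in \Gamma_{n}$. Second, for any $g \in \Gamma_{n}$ pick $y \in g(B_{X}(z,R_{0})) \cap A_{n}$; any other point of $g(B_{X}(z,R_{0}))$ is within distance $2R_{0}$ of $y$, so by the Harnack inequality \eqref{Harnack} (applied to $\rho_{\beta}$) and the comparison $\rho_{\beta}(y) \asymp (e^{-\e n}r)^{\beta/\e}$ valid for $y \in A_{n}$, we get $\rho_{\beta} \asymp (e^{-\e n}r)^{\beta/\e}$ on $g(B_{X}(z,R_{0}))$, with constant depending only on the uniformization data and $R_{0}$. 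Combining these:
\[
\mu_{\beta}(A_{n}) \leq \sum_{g \in \Gamma_{n}} \mu_{\beta}(g(B_{X}(z,R_{0}))) \lesssim (e^{-\e n}r)^{\beta/\e} \cdot e^{h n} \cdot \mu(B_{X}(z, R_{0})) = e^{(h - \beta) n}\, r^{\beta/\e}\, \mu(B_{X}(z, R_{0})),
\]
where the implicit constant depends on the data and on $C_{h}$. Since $h - \beta = (h_{X} - \beta)/2 < 0$, summing the geometric series gives
\[
\mu_{\beta}\Big( \bigcup_{n > n_{0}} A_{n} \Big) \lesssim r^{\beta/\e}\, \mu(B_{X}(z, R_{0})),
\]
with constant depending only on the data and $C_{h}$. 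Combining with the small-$n$ estimate and invoking Proposition \ref{global doubling} yields the doubling property for $\mu_{\beta}$ with the announced dependence.

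The main obstacle is the covering step: verifying that the translates $g(B_X(z, R_0))$ hitting $A_n$ are genuinely counted by $N_\Gamma(z, n + c_0 + R_0)$ and that the Harnack inequality applies uniformly across each translate. Once that is in place, the key gain is that replacing the generic doubling-based count $e^{\alpha n}$ by the $\Gamma$-invariant count $e^{hn}$ shifts the convergence threshold from $\beta \geq 3\alpha$ down to $\beta > h_X$.
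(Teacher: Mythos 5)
Your proposal is correct and follows essentially the same route as the paper: the same annular decomposition, the same bound $|xz|\leq n+c_{0}$, a cover of each $A_{n}$ by $\Gamma$-translates of $B_{X}(z,R_{0})$ counted via Lemma \ref{independence} with $h=(\beta+h_{X})/2$, the Harnack and $\Gamma$-invariance comparisons, and the geometric series feeding into Proposition \ref{global doubling}. The only (harmless) difference is that you retain the $n\leq n_{0}$ versus $n>n_{0}$ case split from Theorem \ref{doubling threshold}, whereas the paper's covering argument handles all $n\geq 1$ uniformly.
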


\begin{proof}
We follow the outline of the proof of Theorem \ref{doubling threshold} above for a given  $\beta > h_{X}$, but using the estimate \eqref{exponential crit} in place of the use of Lemma \ref{enlarge doubling refined}. As in that proof, we let $\xi \in \p X_{\e}$, $z \in X_{\e}$, and $r > 0$ be given such that $B_{\e}(z,\kappa_{1} r) \subset B_{\e}(\xi,r)$ and $d_{\e}(z) \geq 2\kappa_{1} r$, where we recall that $\kappa_{1} = \kappa_{0}/10$ depends only on the uniformization data, and we will take all implied constants to depend only on the data and the constant $C_{h}$ in Lemma \ref{independence} with $h = (\beta+ h_{X})/2$, except where otherwise noted. We define for $n \geq 1$, 
\begin{equation}\label{annulus define}
A_{n} = \{x \in B_{\e}(\xi,r) \cap X_{\e}: e^{-\e n}r \leq d_{\e}(x) < e^{-\e(n-1)}r\}.
\end{equation}
and note as before that we have
\[
\mu_{\beta}(B_{\e}(\xi,r)) = \sum_{n=1}^{\infty} \mu_{\beta}(A_{n}). 
\]
The estimates of the proof of Theorem \ref{doubling threshold} then show that we have a constant $c_{0} = c_{0}(\delta,K,\e,M) \geq 0$ such that for each $n \geq 1$ and $x \in A_{n}$ we have $|xz| \leq n+c_{0}$. 

Recall that $R_{0} > 0$ was chosen such that the translates of the ball $B_{0}:=B_{X}(z,R_{0})$ by $\Gamma$ cover $X$. For each $n \geq 1$ we let $\{g_{n,j}\}_{j=1}^{s_{n}} \subset \Gamma$ be a minimal collection of group elements such that the balls $g_{n,j}(B_{0})$ cover $A_{n}$ for $1 \leq j \leq s_{n}$. By minimality we can assume that each of these balls intersects $A_{n}$. Setting $c_{*} = 2R_{0} + c_{0}$, we then have $g_{n,j}(B_{0}) \subset B_{X}(z,n+c_{*})$ for $1 \leq j \leq s_{n}$. In particular $g_{n,j}(p) \in B_{X}(z,n+c_{*})$ for each $n$ and $j$. It follows from \eqref{exponential crit} with $h = (\beta + h_{X})/2$ that 
\[
s_{n} \leq C_{h}e^{h(n+c_{*})} \asymp e^{h n},
\]
since $h > h_{X}$, with the second comparison making the constants implicit. On the other hand, letting $x_{n,j} \in g_{n,j}(B_{0}) \cap A_{n}$ be a point in this intersection, we have $d_{\e}(x_{n,j}) \asymp re^{-\e n}$ and therefore $\rho_{\e}(x_{n,j}) \asymp re^{-\e n}$ by Lemma \ref{compute distance}. Hence $\rho_{\beta}(x_{n,j}) \asymp r^{\beta/\e}e^{-\beta n}$. Since all of the balls $g_{n,j}(B_{0})$ have radius $R_{0}$ and since $\mu$ is $\Gamma$-invariant, the Harnack inequality \eqref{Harnack} implies that 
\[
\mu_{\beta}(g_{n,j}(B_{0})) \asymp r^{\beta/\e}e^{-\beta n} \mu(g_{n,j}(B_{0})) = r^{\beta/\e}e^{-\beta n} \mu(B_{0}).
\] 
Thus we conclude that
\[
\mu_{\beta}(A_{n}) \ls \sum_{j=1}^{s_{n}} \mu_{\beta}(g_{n,j}(B_{0})) \ls r^{\beta/\e}s_{n}e^{-\beta n}\mu(B_{0}) \ls r^{\beta/\e}e^{(h-\beta)n}\mu(B_{0}). 
\]
Since $h < \beta$, we obtain by summing the geometric series that
\[
\mu_{\beta}(B_{\e}(\xi,r)) \ls \sum_{n=1}^{\infty} r^{\beta/\e}e^{(h-\beta)n}\mu(B_{0}) \ls r^{\beta/\e}\mu(B_{0}).
\]
Thus the hypotheses of Proposition \ref{global doubling} hold, so we can conclude the desired doubling estimate for $\mu_{\beta}$. 
\end{proof}


Propsition \ref{crit doubling} proves the doubling claim of Theorem \ref{Riem doubling}, as we will see in the next section. As Remark \ref{renormalize} below indicates, this range for the measure to be doubling is generally sharp.  

\begin{rem}\label{renormalize}
For this remark we assume that we are in the setting of Theorem \ref{Riem doubling}: we let $X$ be a complete simply connected negatively curved Riemannian manifold with sectional curvatures $\leq -1$ and assume that we have a cocompact isometric discrete action by a group $\Gamma$ on $X$. We denote the $\Gamma$-invariant Riemannian volume on $X$ by $\mu$, fix a point $z \in X$, and consider the measure $\mu_{\beta,z}$ on $X$ defined for each $\beta > 0$ by 
\[
d\mu_{\beta,z}(x) = e^{-\beta |xz|}d\mu(x). 
\]
By the theory of Patterson-Sullivan measures (see for instance \cite[Th\'eor\`eme 1.7]{R03}) we have $\mu_{\beta,z}(X) < \infty$ if and only if $\beta > h_{X}$. Since the conformal deformation $X_{1,z}$ of $X$ with conformal factor $\rho_{1,z}(x) = e^{-|xz|}$ is bounded, this implies that $\mu_{\beta,z}$ is not doubling on $X_{1,z}$ when $\beta \leq h_{X}$. If we consider the renormalizations $\bar{\mu}_{\beta,z} = \mu_{\beta,z}(X)^{-1}\mu_{\beta,z}$ of $\mu_{\beta,z}$ for $\beta > h_{X}$ as a measure on $\bar{X}_{1,z} \cong X \cup \p X$ and take the limit as $\beta \rightarrow h_{X}$ then these measures converge in the weak* topology to a measure $\nu_{z}$ on $X \cup \p X$ that is supported on $\p X$; here we are using that the induced topology on $X \cup \p X$ from the identification $\bar{X}_{1,z} \cong X \cup \p X$ coincides with the standard \emph{cone topology} on $X \cup \p X$, see \cite[Remark 4.14(b)]{BHK}. This measure $\nu_{z}$ will be uniformly comparable with the Patterson-Sullivan measure on $\p X$ based at $z$.

\end{rem}

\section{Poincar\'e inequalities for uniformized measures}\label{sec:poincare}

We begin this section by formally introducing Poincar\'e inequalities. We let $(X,d,\mu)$ be a metric measure space with the property that $0 < \mu(B) < \infty$ for all balls $B \subset X$.  For a measurable subset $E \subset X$ satisfying $0 < \mu(E) < \infty$ and a function $u$ that is $\mu$-integrable over $E$ we write 
\begin{equation}\label{average notation}
u_{E} = \dashint_{E} u \, d\mu = \frac{1}{\mu(E)}\int_{E} u \, d\mu
\end{equation}
for the mean value of $u$ over $E$. Let $u: X \rightarrow \R$ be given. A Borel function $g: X \rightarrow [0,\infty]$ is an \emph{upper gradient} for $u$ if for each rectifiable curve $\gamma$ joining two points $x,y \in X$ we have
\[
|u(x)-u(y)| \leq \int_{\gamma} g \, ds.
\]
A measurable function $u: X \rightarrow \R$ is \emph{integrable on balls} if for each ball $B \subset X$ we have that $u$ is integrable over $B$. For a given $p \geq 1$ we say that $X$ \emph{supports a $p$-Poincar\'e inequality} if there are constants $\la \geq 1$ and $C_{\mathrm{PI}} > 0$ such that for each measurable function $u: X \rightarrow \R$ that is integrable on balls, for each ball $B \subset X$, and each upper gradient $g$ of $u$ we have
\begin{equation}\label{uniformly local Poincare}
\dashint_{B} |u-u_{B}|\,d\mu \leq C_{\mathrm{PI}}\diam(B)\left(\dashint_{\la B} g^{p} \, d\mu\right)^{1/p}, 
\end{equation}
for a constant $C_{\mathrm{PI}} > 0$. The constant $\la$ is called the \emph{dilation constant}. If there is a constant $R_{0} > 0$ such that \eqref{uniformly local Poincare} only holds on balls of radius at most $R_{0}$ then we will say that $X$ \emph{supports a $p$-Poincar\'e inequality on balls of radius at most $R_{0}$}. We will also say that $X$ \emph{supports a uniformly local $p$-Poincar\'e inequality}. By H\"older's inequality a metric measure space that supports a $p$-Poincar\'e inequality also supports a $q$-Poincar\'e inequality for each $q \geq p$, and the same is true in regards to supporting a uniformly local $p$-Poincar\'e inequality.

For this section we carry over the same standing hypotheses and notation as discussed at the start of Section \ref{sec:doubling}. We will assume in addition that we are given $p \geq 1$ such that the Gromov hyperbolic space $X$ is equipped with a uniformly locally doubling measure $\mu$ that supports a $p$-Poincar\'e inequality on balls of radius at most $R_{0}$, where $R_{0}$ is the same radius up to which $\mu$ is doubling on $X$. We note that Proposition \ref{enlarge doubling} implies that there is no loss of generality in assuming that these two radii are the same. We will also assume that $\mu_{\beta}$ is doubling on $\bar{X}_{\e}$ for some constant $C_{\mu_{\beta}}$. We will show under these hypotheses that the metric measure space $(\bar{X}_{\e},d_{\e},\mu_{\beta})$ supports a $p$-Poincar\'e inequality with dilation constant $\la = 1$ and constant $C_{\mathrm{PI}}^{*}$ depending only on the uniformization data and the constants $R_{0}$, $C_{\mu}$, $C_{\beta}$, $p$, $\la$, and $C_{\mathrm{PI}}$ associated to the uniformly local doubling property of $\mu$, the global doubling of $\mu_{\beta}$, and the uniformly local $p$-Poincar\'e inequality on $X$. In particular this proves Theorem \ref{global Poincare}. 

The proof splits into two steps. In the first step we show that the $p$-Poincar\'e inequality \eqref{uniformly local Poincare} holds on sufficiently small subWhitney balls in the metric measure space $(X_{\e},d_{\e},\mu_{\beta})$. The proof is essentially identical to \cite[Lemma 6.1]{BBS20}. In the statement and proof of Lemma \ref{Whitney Poincare} ``the data" refers to the uniformization data and the constants $R_{0}$, $C_{\mu}$,  $p$, $\la$, and $C_{\mathrm{PI}}$. For Lemma \ref{uniformly local Poincare} we do not need to assume that $\mu_{\beta}$ is doubling. We will require the following easy lemma. 



\begin{lem}\label{switching lemma}\cite[Lemma 4.17]{BB11}
Let $u: X \rightarrow \R$ be integrable, let $p \geq 1$, let $\alpha \in \R$, and let $E \subset X$ be a measurable set with $0 < \mu(E) < \infty$. Then
\[
\left(\dashint_{E} |u-u_{E}|^{p}\,d\mu\right)^{1/p} \leq 2\left(\dashint_{E} |u-\alpha|^{p}\,d\mu\right)^{1/p}
\]
\end{lem}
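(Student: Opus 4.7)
The plan is to reduce the inequality to a short manipulation using Minkowski's inequality together with Jensen's inequality, treating the quantity $u_E - \alpha$ as a constant that can be estimated by the $L^p$-average of $u - \alpha$.

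First I would write $u - u_E = (u - \alpha) - (u_E - \alpha)$. Since $u_E - \alpha$ is a constant, Minkowski's inequality applied in $L^p(E, \mu/\mu(E))$ gives
\[
\left(\dashint_E |u - u_E|^p\, d\mu\right)^{1/p} \leq \left(\dashint_E |u - \alpha|^p\, d\mu\right)^{1/p} + |u_E - \alpha|.
\]
The remaining task is to control $|u_E - \alpha|$. Here I would use that $u_E - \alpha = \dashint_E (u - \alpha)\, d\mu$, so by Jensen's inequality applied to the convex function $t \mapsto |t|^p$,
\[
|u_E - \alpha|^p = \left|\dashint_E (u - \alpha)\, d\mu\right|^p \leq \dashint_E |u - \alpha|^p\, d\mu.
\]
Taking $p$-th roots and substituting into the previous display yields the factor of $2$ on the right-hand side, which is the claimed inequality.

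There is no real obstacle; the statement is a routine functional-analytic fact, and the only thing to watch is that the additive constant $u_E - \alpha$ is pulled out of the integral correctly before Minkowski is applied. A citation to \cite[Lemma 4.17]{BB11} suffices in lieu of writing out these two lines in full.
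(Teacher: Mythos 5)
Your argument is correct and is exactly the standard proof of this fact (the paper itself only cites \cite[Lemma 4.17]{BB11} rather than proving it): decompose $u-u_E=(u-\alpha)-(u_E-\alpha)$, apply Minkowski in the normalized measure $\mu/\mu(E)$, and control the constant $|u_E-\alpha|$ by Jensen's inequality. Nothing is missing.
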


\begin{lem}\label{Whitney Poincare}
There exists $c_{0} > 0$ depending only on the uniformization data and $R_{0}$ such that for all $x \in X_{\e}$ and all $0 < r\leq c_{0}d_{\e}(x)$ the $p$-Poincar\'e inequality \eqref{uniformly local Poincare} for $\mu_{\beta}$ holds on the ball $B_{\e}(x,r)$ with dilation constant $\hat{\la}$ and constant $\hat{C}_{\mathrm{PI}}$ depending only on the data.
\end{lem}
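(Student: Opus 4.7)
The plan is to transfer the $p$-Poincar\'e inequality from $(X,d,\mu)$ to $(X_{\e}, d_{\e}, \mu_{\beta})$ on sufficiently small subWhitney balls, exploiting the fact that on such balls the densities $\rho_{\e}$ and $\rho_{\beta}$ are essentially constant by the Harnack inequality \eqref{Harnack}. We will choose $c_{0}$ depending only on the uniformization data and $R_{0}$ so that, after enlarging radii via Proposition \ref{enlarge doubling} if necessary, all balls in $X$ that appear in the argument lie within the uniformly local doubling/Poincar\'e regime.

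Fix $x \in X_{\e}$ and $0 < r \leq c_{0} d_{\e}(x)$, with $c_{0} \leq 1/2$. For $y \in B_{\e}(x,r)$ the triangle inequality gives $d_{\e}(y) \asymp_{2} d_{\e}(x)$, so by Lemma \ref{compute distance} and the Harnack inequality we have $\rho_{\e}(y) \asymp \rho_{\e}(x)$ and $\rho_{\beta}(y) \asymp \rho_{\beta}(x)$ on $B_{\e}(x,r)$ with constants depending only on the uniformization data. By Lemma \ref{sub inclusion}, $B_{\e}(x,r)$ is sandwiched between $B_{X}(x, C_{*}^{-1} r/\rho_{\e}(x))$ and $B^{*} := B_{X}(x, C_{*} r/\rho_{\e}(x))$, which has radius $\lesssim c_{0}$ by Lemma \ref{compute distance}. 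Consequently $d\mu_{\beta} \asymp \rho_{\beta}(x)\,d\mu$ on $B^{*}$, so $\mu_{\beta}$-averages are comparable to $\mu$-averages over any subset of $B^{*}$, and $\mu_{\beta}(B_{\e}(x,r)) \asymp \mu_{\beta}(B^{*})$ by applying Lemma \ref{sub measure} together with local doubling of $\mu$.

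Since curves in $X_{\e}$ and $X$ coincide but arclength transforms by the factor $\rho_{\e}$, any upper gradient $g$ of $u$ in $X_{\e}$ yields an upper gradient $g\rho_{\e}$ of $u$ in $X$. Applying the switching Lemma \ref{switching lemma} with $\alpha = u_{B^{*}}^{\mu}$ together with the measure comparability just noted, we bound the left-hand side of \eqref{uniformly local Poincare} on $B_{\e}(x,r)$ by a constant multiple of $\dashint_{B^{*}} |u - u_{B^{*}}^{\mu}|\,d\mu$. Choosing $c_{0}$ small enough in terms of $\la$, $C_{*}$, the constant in Lemma \ref{compute distance}, and $R_{0}$ (after enlarging the local doubling/Poincar\'e radius via Proposition \ref{enlarge doubling}), the ball $\la B^{*}$ has radius at most $R_{0}$, so the uniformly local $p$-Poincar\'e inequality for $\mu$ applies and yields an upper bound of $C_{\mathrm{PI}}(2r/\rho_{\e}(x))\bigl(\dashint_{\la B^{*}}(g\rho_{\e})^{p}\,d\mu\bigr)^{1/p}$. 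Using $\rho_{\e} \asymp \rho_{\e}(x)$ on $\la B^{*}$ cancels the factor $\rho_{\e}(x)$, leaving an upper bound of the form $\lesssim r\bigl(\dashint_{\la B^{*}} g^{p}\,d\mu_{\beta}\bigr)^{1/p}$.

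To close, apply Lemma \ref{sub inclusion} in the opposite direction to obtain $\la B^{*} \subset B_{\e}(x,\hat\la r)$ for $\hat\la := C_{*}\la$; local doubling of $\mu$ combined with Lemma \ref{sub measure} then gives $\mu_{\beta}(\la B^{*}) \asymp \mu_{\beta}(B_{\e}(x,\hat\la r))$, so the average of $g^{p}$ over $\la B^{*}$ is dominated by the average over $B_{\e}(x,\hat\la r)$. This yields the desired $p$-Poincar\'e inequality on $B_{\e}(x,r)$ with dilation $\hat\la$ and constant depending only on the data. The principal obstacle is the careful calibration of $c_{0}$: one must ensure simultaneously that $\rho_{\e}$ is essentially constant on $B^{*}$ and $\la B^{*}$, that these balls (and the enlargements needed to invoke local doubling of $\mu$) have radii bounded by $R_{0}$, and that $\hat\la r$ remains within the subWhitney regime so the reverse inclusion in Lemma \ref{sub inclusion} is valid; this is arranged by exploiting the uniform bound $d_{\e}(\cdot)/\rho_{\e}(\cdot) \lesssim 1$ from Lemma \ref{compute distance} together with the freedom to enlarge $R_{0}$ via Proposition \ref{enlarge doubling}.
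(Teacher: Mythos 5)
Your proposal is correct and follows essentially the same route as the paper's proof: sandwich the subWhitney ball between metric balls of $X$ via Lemma \ref{sub inclusion}, use Lemma \ref{compute distance} and the Harnack inequality to treat $\rho_{\e}$ and $\rho_{\beta}$ as constants, transfer upper gradients via $g\rho_{\e}$, apply the uniformly local $p$-Poincar\'e inequality for $\mu$, and finish with the switching Lemma \ref{switching lemma}. The only quibble is a harmless constant: the left inclusion of Lemma \ref{sub inclusion} gives $\la B^{*}\subset B_{\e}(x,\hat\la r)$ only for $\hat\la = \la C_{*}^{2}$ rather than $C_{*}\la$ (and $c_{0}$ must then be taken $\leq 1/(2\hat\la)$ so this radius stays subWhitney), which matches the paper's choice $\hat\la=C_{*}^{2}$.
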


\begin{proof}
Put $B_{\e} = B_{\e}(x,r)$ with $0 < r \leq c_{0}d_{\e}(x)$, where $0 < c_{0} \leq \frac{1}{2}$ is a constant to be determined. Let $C_{*}$ be the constant of Lemma \ref{sub inclusion}. We choose $c_{0} > 0$ small enough that $c_{0}C_{*}^{2} \leq \frac{1}{2}$. We conclude by applying Lemma \ref{sub inclusion} twice that 
\begin{equation}\label{ball chain}
B_{\e} \subset B:=B_{X}\left(x,\frac{C_{*}r}{\rho_{\e}(x)}\right)  \subset B_{\e}\left(x,C_{*}^{2}r\right) = \hat{\la} B_{\e},
\end{equation}
with $\hat{\la} = C_{*}^{2}$, since 
\[
C_{*}^{2}r \leq c_{0}C_{*}^{2}d_{\e}(x) \leq \frac{1}{2}d_{\e}(x).
\]
Moreover by \eqref{comparison chain} we see that for all $y \in \hat{\la} B_{\e}$ we have $\rho_{\beta}(y) \asymp \rho_{\beta}(x)$ with comparison constant depending only on the uniformization data.

Now let $u$ be a function on $X_{\e}$ that is integrable on balls and let $g_{\e}$ be an upper gradient of $u$ on $X_{\e}$. By the same basic calculation as in \cite[(6.3)]{BBS20} we have that $g:=g_{\e}\rho_{\e}$ is an upper gradient of $u$ on $X$. For $c_{0}$ sufficiently small (depending only on the uniformization data and $R_{0}$) we will have by Lemma \ref{compute distance} that
\[
\frac{C_{*}r}{\rho_{\e}(x)} \leq \frac{C_{*}c_{0}d_{\e}(x)}{\rho_{\e}(x)} \leq R_{0}.
\]
Thus the $p$-Poincar\'e inequality \eqref{uniformly local Poincare} (for $\mu$) holds on $B$. Since $\rho_{\beta}(y) \asymp \rho_{\beta}(x)$ on $\hat{\la} B_{\e}$ with comparison constant depending only on the uniformization data (by \eqref{comparison chain}) we have that
\begin{equation}\label{measure comparison}
\mu_{\beta}(B) \asymp \rho_{\beta}(x)\mu(B),
\end{equation}
with comparison constant depending only on the uniformization data, and the same comparison holds with either $B_{\e}$ or $\hat{\la}B_{\e}$ replacing $B$. Writing $u_{B,\mu} = \dashint_{B} u \, d\mu$, we conclude by using the inclusions of \eqref{ball chain}, the measure comparison \eqref{measure comparison}, and the $p$-Poincar\'e inequality for $\mu$ on $B$,
\begin{align*}
\dashint_{B_{\e}}|u-u_{B,\mu}| \, d\mu_{\beta} &\lesssim \dashint_{B} |u-u_{B,\mu}| d\mu \\
&\leq \frac{2C_{\mathrm{PI}}C_{*}r }{\rho_{\e}(x)}\left(\dashint_{B} g^{p}\, d\mu\right)^{1/p} \\
&\asymp \frac{r}{\rho_{\e}(x)}\left(\dashint_{B} (g_{\e}\rho_{\e})^{p}\, d\mu_{\beta}\right)^{1/p} \\
&\lesssim r \left(\dashint_{\hat{\la} B_{\e}} g_{\e}^{p} \, d\mu_{\beta}\right)^{1/p},
\end{align*}
where all implied constants depend only on the data. By Lemma \ref{switching lemma} we can replace $u_{B,\mu}$ with $u_{B_{\e},\mu_{\beta}} = \dashint_{B_{\e}} u \, d\mu_{\beta}$ on the left to conclude the proof of the lemma. 
\end{proof}

The second part of the proof is the following key proposition.

\begin{prop}\label{uniform upgrade}\cite[Proposition 6.3]{BBS20} 
Let $\Omega$ be an $A$-uniform metric space equipped with a doubling measure $\nu$ such that there is a constant $0 < c_{0} < 1$ for which the $p$-Poincar\'e inequality \eqref{uniformly local Poincare} holds for fixed constants $C_{\mathrm{PI}}$ and $\la$ on all subWhitney balls $B$ of the form $B = B_{\Omega}(x,r)$ with $x \in \Omega$ and $0 < r \leq c_{0}d_{\Omega}(x)$. Then the metric measure space $(\Omega,d,\nu)$ supports a $p$-Poincar\'e inequality with dilation constant $A$ and constant $C'_{\mathrm{PI}}$ depending only on $A$, $c_{0}$, $p$, $C_{\mathrm{PI}}$, $\la$, and the doubling constant $C_{\nu}$ for $\nu$. 
\end{prop}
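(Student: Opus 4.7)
The plan is to follow the classical chaining strategy for upgrading a subWhitney Poincar\'e inequality to a global Poincar\'e inequality on a uniform metric measure space, as worked out in e.g.\ \cite{BB11} and adapted in \cite{BBS20}. Fix a ball $B = B_\Omega(x_0, r_0) \subset \Omega$, a function $u$ integrable on $B$, and an upper gradient $g$ of $u$; the goal is to show
\[
\dashint_B |u - u_B|\, d\nu \ls r_0 \left(\dashint_{AB} g^p\, d\nu\right)^{1/p}.
\]

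First I would select a corkscrew point $z_0 \in B$ with $d_\Omega(z_0) \gs r_0/A$, obtained by walking a distance $\asymp r_0/A$ along an $A$-uniform curve out of $x_0$ inside $B$, in direct analogy with the proof of Lemma \ref{choosing center}. Set $B_* = B_\Omega(z_0, c_0 d_\Omega(z_0)/2)$, a subWhitney ball of radius $\asymp r_0/A$ contained in $B$. Next, for each $y \in B$ I would fix an $A$-uniform curve $\gamma_y$ joining $y$ to $z_0$ and construct along it a chain of subWhitney balls $B_0^y, \ldots, B_{N(y)}^y$ centered at successive points $\gamma_y(t_i)$ with radii $r_i^y \asymp d_\Omega(\gamma_y(t_i))/(2A)$, so that consecutive balls overlap, $y \in 2B_0^y$, and $B_{N(y)}^y = B_*$. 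The uniform condition \eqref{uniform two} forces the $r_i^y$ to decay at least geometrically toward the endpoints of $\gamma_y$, and each enlargement $\lambda B_i^y$ lies in $AB$ after a controlled adjustment of the dilation constant.

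Applying the hypothesized local $p$-Poincar\'e inequality on each $B_i^y$ together with the doubling of $\nu$ to compare averages over consecutive overlapping balls gives a telescoping pointwise estimate at each Lebesgue point $y$ of $u$,
\[
|u(y) - u_{B_*}| \ls \sum_{i=0}^{N(y)} r_i^y \left(\dashint_{\lambda B_i^y} g^p \, d\nu\right)^{1/p}.
\]
The final step is to integrate this estimate in $y$ over $B$ against $\nu$, interchange the sum and integral, and bound the result by a bounded-overlap count: for each subWhitney ball $W$ of a given radius $r$, the $\nu$-measure of $\{y \in B : W \text{ lies in the chain of } y\}$ is controlled by a doubling factor times $(r/r_0)\, \nu(B)$. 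Combined with the geometric decay of the $r_i^y$ along each chain and H\"older's inequality (or, when $p=1$, a truncated/fractional-maximal-function argument), this collapses the double sum into a single integral of $g^p$ over $AB$ with the correct $r_0$ weight. Finally, Lemma \ref{switching lemma} replaces $u_{B_*}$ by $u_B$ at the cost of a factor of $2$.

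The main obstacle will be making the bounded-overlap count rigorous with the correct weight so that the right-hand side carries an $r_0$ (rather than $r_0 \log r_0$) factor. This is where the full strength of $A$-uniformity is used: it prevents the chains from clustering along any single subWhitney ball and forces their radii to decay fast enough near the endpoints that the geometric series arising from the overlap count converges. The bookkeeping is delicate but standard in the uniform/John-domain literature, and the argument transports verbatim from \cite[Proposition 6.3]{BBS20} once the subWhitney Poincar\'e inequality of Lemma \ref{Whitney Poincare} and the doubling of $\nu$ are in hand.
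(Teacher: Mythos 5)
The paper does not actually prove this proposition: it is imported verbatim from \cite{BBS20} (their Proposition 6.3), and the only content the paper adds is the one-sentence remark that the bounded-case proof there goes through unchanged for unbounded uniform spaces once $\nu$ is doubling at all scales and the subWhitney inequality holds at all scales. Your chaining sketch --- corkscrew central ball, chains of subWhitney balls along $A$-uniform curves, telescoping the local inequality, then closing via bounded overlap/maximal-function estimates with a truncation argument when $p=1$ --- is exactly the standard argument underlying that cited proof, so your approach is the right one and consistent with the source; just be aware that the step you defer as ``delicate but standard'' (summing the chain with the correct $r_0$ weight, and the Maz'ya-type truncation needed to get the strong inequality at $p=1$) is the entire technical content of the cited result, so a self-contained write-up would have to carry it out rather than gesture at it.
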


This proposition is stated for bounded $A$-uniform metric spaces in \cite{BBS20} but the proof works without modification for unbounded $A$-uniform metric spaces provided that the doubling property of $\nu$ holds at all scales and the $p$-Poincar\'e inequality on subWhitney balls hold at all appropriate scales. 

We can now verify the global $p$-Poincar\'e inequality on $\bar{X}_{\e}$, which proves Theorem \ref{global Poincare}. Below ``the data" includes all the constants from Lemma \ref{Whitney Poincare} as well as the doubling constant $C_{\mu_{\beta}}$ for $\mu_{\beta}$. 

\begin{proof}[Proof of Theorem \ref{global Poincare}]
By Lemma \ref{Whitney Poincare} there is a $c_{0} > 0$ determined only by the data such that the $p$-Poincar\'e inequality holds on subWhitney balls of the form $B_{\e}(x,r)$ with $0 < r \leq c_{0}d_{\e}(x)$ for $x \in X$, with uniform constants $\hat{C}_{\mathrm{PI}}$ and $\hat{\la}$. Since $(X_{\e},d_{\e})$ is an $A$-uniform metric space with $A = A(\delta,K,\e,M)$  and we assumed $\mu_{\beta}$ is globally doubling on $X_{\e}$ with constant $\mu_{\beta}$, it follows from Proposition \ref{uniform upgrade} that the metric measure space $(X_{\e},d_{\e},\mu_{\beta})$ supports a $p$-Poincar\'e inequality with constant $C_{\mathrm{PI}}'$ depending only on the data and dilation constant $A$. Since $X_{\e}$ is geodesic it follows that the $p$-Poincar\'e inequality \eqref{uniformly local Poincare} in fact holds with dilation constant $1$, with constant  $C_{\mathrm{PI}}^{*}$ depending only on the data \cite[Theorem 4.18]{Hein01}.

By \cite[Lemma 8.2.3]{HKST} we conclude that the completion $(\bar{X}_{\e},d_{\e},\mu_{\beta})$ (with $\mu_{\beta}(\p X_{\e}) = 0$) also supports a $p$-Poincar\'e inequality with constants depending only on the constants for the $p$-Poincar\'e inequality on $X_{\e}$ and the doubling constant of $\mu_{\beta}$. Since $\bar{X}_{\e}$ is also geodesic it follows by the same reasoning \cite[Theorem 4.18]{Hein01} that we can take the dilation constant to be $1$ in this case as well. 
\end{proof}

We can now prove Theorem \ref{Riem doubling} as well. 

\begin{proof}[Proof of Theorem \ref{Riem doubling}]  
Let $X$ be a complete simply connected $n$-dimensional Riemannian manifold $X$ with sectional curvatures $\leq -1$ that is equipped with a cocompact discrete isometric action of a group $\Gamma$. Then $X$ is $\delta$-hyperbolic with $\delta = \delta(\mathbb{H}^{2})$ being the same as that of the hyperbolic plane $\mathbb{H}^{2}$ of constant negative curvature $-1$ \cite[p. 169]{BH99}. Let $\mu$ be the $\Gamma$-invariant Riemannian volume on $X$.  The space $X$ is $0$-roughly starlike from any point of $X \cup \p X$ since any geodesic $\gamma: I \rightarrow X$ defined on any interval $I \subset \R$ can be uniquely extended to a full geodesic line $\gamma: \R \rightarrow X$. By \cite[Theorem 1.10]{Bu20} the densities $\rho_{1,b}$ for $b \in \hat{\mathcal{B}}(X)$ are GH-densities with a uniform constant $M$. Thus we can apply the results of the previous sections here with this constant $M$ and $\delta = \delta(\mathbb{H}^{2})$, $K = 0$, and $\e = 1$. 

Choose $R_{0} > 0$ large enough that for each $x \in X$ the translates of the ball $B_{X}(x,R_{0})$ by $\Gamma$ cover $X$. On each such ball $B_{X}(x,R_{0})$ the Riemannian metric on $X$ is biLipschitz to the standard Euclidean metric on the unit ball in $\R^{n}$ with biLipschitz constant independent of $x$ (by the cocompactness of $\Gamma$) and the Riemannian volume is uniformly comparable to the standard $n$-dimensional Lebesgue measure. Since $\R^{n}$ equipped with the $n$-dimensional Lebesgue measure is a doubling metric measure space that supports a $1$-Poincar\'e inequality \cite[Chapter 4]{Hein01}, it follows that $X$ equipped with $\mu$ is uniformly locally doubling and supports a uniformly local $1$-Poincar\'e inequality. We remark that all of the parameters considered so far are independent of the choice of $b \in \hat{\mathcal{B}}(X)$. 

We conclude by Proposition \ref{crit doubling} that for each $\beta > h_{X}$ the metric measure space $(\bar{X}_{1,b},d_{1,b},\mu_{\beta,b})$ is doubling with a uniform doubling constant $C_{\mu_{\beta}}$ independent of the choice of $b \in \hat{\mathcal{B}}(X)$. The $1$-Poincar\'e inequality on $(X_{1,b},d_{1,b},\mu_{\beta,b})$ and $(\bar{X}_{1,b},d_{1,b},\mu_{\beta,b})$ then follows from Theorem \ref{global Poincare}.
\end{proof}

\section{Uniform inversion}\label{sec:inversion}

In this section we consider a procedure that we will call \emph{uniform inversion} that can be used to convert bounded uniform metric spaces into unbounded uniform metric spaces and vice versa. This procedure can be thought of as a variation of the inversion procedure considered in \cite{HSX08} that is specialized to the context of uniform metric spaces. We show that this procedure can be extended to measures in such a way that it preserves the doubling property and $p$-Poincar\'e inequalities for a given $p \geq 1$. For general metric measure spaces it was shown by Li and Shanmugalingam \cite{LS15} that the doubling property can be preserved under sphericalization and inversion, however they were only able to obtain preservation of $p$-Poincar\'e inequalities under the additional assumption that the space was annularly quasiconvex. This condition excludes many uniform metric spaces such as those that are obtained by uniformizing trees. With a weaker assumption Durand-Cartagena and Li \cite{DL15} showed that $p$-Poincar\'e inequalities can be preserved once $p$ is sufficiently large. Using the results of the previous sections we will show that uniform inversion preserves $p$-Poincar\'e inequalities for \emph{all} $p \geq 1$. 

Let $(\Omega,d)$ be an $A$-uniform metric space, $A \geq 1$. We denote the distance to the metric boundary of $\Omega$ by $d(x):=d_{\Omega}(x)$ for $x \in \Omega$. The \emph{quasihyperbolic metric} on $\Omega$ is defined by, for $x,y \in \Omega$, 
\begin{equation}\label{quasihyperbolic metric}
k(x,y) = \inf \int_{\gamma} \frac{ds}{d(\gamma(s))},
\end{equation}
where the infimum is taken over all rectifiable curves joining $x$ to $y$. The metric space $Y = (\Omega,k)$ is called the \emph{quasihyperbolization} of the metric space $(\Omega,d)$. We note that $Y$ can equivalently be thought of as the conformal deformation of $\Omega$ with conformal factor $\rho(x) = d(x)^{-1}$. The quasihyperbolication $Y$ is a proper geodesic $\delta$-hyperbolic space by \cite[Theorem 3.6]{BHK} with $\delta = \delta(A)$ depending only on $A$.

To precisely state our claims below we introduce the following ratio when $\Omega$ is bounded,
\begin{equation}\label{tightness}
\phi(\Omega):= \frac{\diam \, \Omega}{\diam \, \p \Omega},
\end{equation}
where we define $\phi(\Omega) = \infty$ if $\p \Omega$ contains only one point. Until the end of this section we will always assume that $\phi(\Omega) < \infty$ if $\Omega$ is bounded, i.e., that $\p \Omega$ contains at least two points. 

\begin{rem}\label{one point}
The case of bounded $\Omega$ with $\p \Omega$ containing only one point is rather degenerate so we will not discuss it here. For instance if $\Omega = [0,1)$ then its quasihyperbolization $Y$ is isometric to $[0,\infty)$, and a Busemann function $b$ on $[0,\infty)$ based at the only point $\infty$ in the Gromov boundary of $[0,\infty)$ is given by $b(t) = -t$ for $t \in [0,\infty)$. By direct calculation we then see for every $\e > 0$ that $Y_{\e,b}$ is also isometric to $[0,\infty)$. In particular $Y_{\e,b}$ is actually a complete metric space, so it can't be a uniform metric space.
\end{rem}

When $\Omega$ is unbounded there is a constant $K = K(A)$ depending only on $A$ such that $Y$ is $K$-roughly starlike from any point of $Y \cup \p Y$, while when $\Omega$ is bounded there is a constant $K = K(A)$ such that $Y$ is $K$-roughly starlike from any $z \in \Omega$ such that $d(z) = \sup_{x \in \Omega}d(x)$, and a constant $K' = K'(A,\phi(\Omega))$ depending only on $A$ and the ratio $\phi(\Omega)$ such that $Y$ is $K'$-roughly starlike from any point of $Y \cup \p Y$ \cite[Proposition 3.3]{Bu21}. The dependence of $K'$ on $\phi(\Omega)$ in the bounded case is necessary by \cite[Example 3.4]{Bu21}. 

From the discussion after inequality \eqref{GH inequality} we can find an $\e = \e(A) > 0$ depending only on $A$ (since $Y$ is $\delta$-hyperbolic with $\delta = \delta(A)$) such that for any $b \in \hat{\mathcal{B}}(Y)$ the density $\rho_{\e,b}(x) = e^{-\e b(x)}$ on $Y$ is a GH-density with constant $M = 20$. We will fix such an $\e$ for each value of $A$ for the rest of this section.  


\begin{defn}\label{def:uniform inversion}
For a given $b \in \hat{\mathcal{B}}(Y)$ we let $\Omega_{b}= Y_{\e,b}$ denote the conformal deformation of $Y$ with conformal factor $\rho_{\e,b}$. We will refer to the metric space $\Omega_{b}$ as the \emph{uniform inversion} of $\Omega$ based at $b$. 
\end{defn}

The next proposition shows that uniform inversions of $\Omega$ have the properties suggested by their name. 

\begin{prop}\label{inversion properties}
Let $\Omega$ be an $A$-uniform metric space. Let $Y = (\Omega,k)$ be the quasihyperbolization of $\Omega$. Then $\Omega_{b}$ is an $A'$-uniform metric space for each $b \in \hat{\mathcal{B}}(Y)$ with $A'= A'(A)$ if $\Omega$ is unbounded and $A' = A'(A,\phi(\Omega))$ if $A'$ is unbounded. Furthermore $\Omega_{b}$ is bounded if and only if $b \in \mathcal{D}(Y)$. 
\end{prop}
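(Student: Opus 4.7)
The plan is to apply \cite[Theorem 1.4]{Bu20} (as summarized in Section \ref{sec:uniformize}) directly to the quasihyperbolization $Y = (\Omega,k)$, together with the rough starlikeness results of \cite{Bu21} already cited in the paragraphs preceding Definition \ref{def:uniform inversion}. In essence, $\Omega_b = Y_{\e,b}$ by definition, and the excerpt has already collected every hypothesis needed to deduce $A$-uniformity of the conformal deformation $Y_{\e,b}$.

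First I would recall that by \cite[Theorem 3.6]{BHK}, $Y$ is proper, geodesic and $\delta$-hyperbolic with $\delta = \delta(A)$, and that by our choice of $\e = \e(A)$, the density $\rho_{\e,b}$ is a GH-density on $Y$ with constant $M = 20$ for every $b \in \hat{\mathcal{B}}(Y)$. This handles two of the three standing hypotheses of Section \ref{sec:uniformize}. Next I would let $\omega \in Y \cup \p Y$ be the basepoint of $b$ and use \cite[Proposition 3.3]{Bu21}, as cited in the paragraph after Remark \ref{one point}, to obtain a rough starlikeness constant for $Y$ from $\omega$: a constant $K = K(A)$ when $\Omega$ is unbounded (so $Y$ is roughly starlike from every boundary point with $A$-dependent constant), and a constant $K' = K'(A,\phi(\Omega))$ when $\Omega$ is bounded. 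This is the only place where the distinction between the bounded and unbounded cases, and the corresponding dependence on $\phi(\Omega)$, enters.

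With all three standing hypotheses of Section \ref{sec:uniformize} now verified, I would invoke \cite[Theorem 1.4]{Bu20}, restated in the paragraph following Lemma \ref{compute distance}, to conclude that $Y_{\e,b}$ is an $A'$-uniform metric space. Tracking the dependence of $A'$ on the parameters $\delta$, $K$ (or $K'$), $\e$, $M$ and using that $\delta, \e, M$ are functions of $A$ alone, we read off $A' = A'(A)$ in the unbounded case and $A' = A'(A,\phi(\Omega))$ in the bounded case, which is precisely the asserted dependence. The boundedness dichotomy $\Omega_b$ bounded $\iff b \in \mathcal{D}(Y)$ follows immediately from \cite[Proposition 4.4]{Bu20}, which was quoted verbatim in the introduction immediately after equation \eqref{define distance}.

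There is essentially no hard step: the entire argument is an organized citation to the earlier summarized results, and the only mild subtlety is keeping the quantitative dependences straight — in particular, the doubled dependence $A' = A'(A,\phi(\Omega))$ in the bounded case reflects the fact that $Y$ fails to be uniformly roughly starlike independent of $\phi(\Omega)$, as documented by \cite[Example 3.4]{Bu21}. Because of this, the proof is best written compactly as a reduction to Section \ref{sec:uniformize}, with all of the real work having already been done in \cite{Bu20} and \cite{Bu21}.
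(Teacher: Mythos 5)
Your proposal is correct and follows essentially the same route as the paper: the paper likewise proves Proposition \ref{inversion properties} by noting that $Y$ is $\delta(A)$-hyperbolic and roughly starlike with constant $K(A)$ (or $K(A,\phi(\Omega))$ in the bounded case), that $\rho_{\e,b}$ is a GH-density with $M=20$ for $\e=\e(A)$, and then citing the uniformization theorem of \cite{Bu20} for both the $A'$-uniformity and the boundedness dichotomy. The only difference is cosmetic: the paper cites \cite[Theorem 1.1]{Bu20} where you cite Theorem 1.4 and Proposition 4.4, but these are the same results as restated in Section \ref{sec:uniformize}.
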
 

All of the claims of Proposition \ref{inversion properties} follow from applying \cite[Theorem 1.1]{Bu20} to $Y$, since $Y$ is $\delta = \delta(A)$-hyperbolic, $K = K(A)$-roughly starlike from any point of $Y \cup \p Y$ if $\Omega$ is unbounded (with $K = K(A,\phi(\Omega))$ instead if $\Omega$ is bounded) and $\rho_{\e,b}$ is a GH-density with constant $M = 20$ (and $\e = \e(A)$). In particular uniform inversion can be used to produce an unbounded uniform metric space $\Omega_{b}$ from a bounded uniform metric space $\Omega$ by choosing $b \in \mathcal{B}(Y)$, and similarly can be used to produce a bounded uniform metric space $\Omega_{b}$ from an unbounded uniform metric space $\Omega$ by choosing $b \in \mathcal{D}(Y)$. 

The primary reason to consider uniform sphericalization and inversion is that these operations can be extended to measures in such a way as to preserve the doubling property and the $p$-Poincar\'e inequality for all $p \geq 1$. This comes at a price of increased complexity of these operations as opposed to the standard inversion operation, with the loss of several nice features of the latter that are obtained in \cite{BHX08}. We remark that one can show that the identity map $\Omega \rightarrow \Omega_{b}$ is always quasim\"obius for any $b \in \hat{\mathcal{B}}(Y)$, as is true of ordinary inversion; see \cite[Proposition 4.4]{Bu21}.

Now suppose in addition that $\Omega$ is equipped with a Borel measure $\nu$ that is doubling and satisfies $0 < \nu(B) < \infty$ for all balls $B \subset \Omega$. We write $C_{\nu}$ for the doubling constant of $\nu$. For each $\alpha > 0$ we define a measure $\mu^{\alpha}$ on $\Omega$ by 
\[
d\mu^{\alpha}(x) = d(x)^{-\alpha}d\nu(x),
\]
and consider $\mu$ as a measure on $Y$. Then \cite[Proposition 7.3]{BBS20} shows for each $\alpha > 0$ that $\mu$ is doubling on balls of radius at most $R_{0} = 1$ with local doubling constant $C_{\mu^{\alpha}}$ depending only on $A$ and $\alpha$. We let $\beta_{0} > 0$ be the exponent determined by applying Theorem \ref{doubling threshold} to the quasihyperbolization $Y$ equipped with the measure $\mu^{\alpha}$ in relation to its uniformization $\Omega_{b} = Y_{\e,b}$ with $b \in \hat{\mathcal{B}}(Y)$ and $\e = \e(A)$. We then choose $\beta \geq \beta_{0}$ and set $\nu_{\alpha,\beta,b} = (\mu^{\alpha})_{\beta,b}$ to be the measure obtained from $\mu^{\alpha}$ by applying the formula \eqref{define beta} with our chosen $b \in \hat{\mathcal{B}}(Y)$. We consider $\nu_{\alpha,\beta,b}$ as defining a two parameter family of measures on $\Omega_{b}$ and write $d_{b}$ for the metric on $\Omega_{b}$. Applying Theorems \ref{doubling threshold} and \ref{global Poincare} to this family yields the following theorem. 

\begin{thm}\label{inversion doubling}
Let $(\Omega,d,\nu)$ be a doubling metric measure space with doubling constant $C_{\nu}$ such that $(\Omega,d)$ is an $A$-uniform metric space with $\phi(\Omega) < \infty$ if $\Omega$ is bounded. Let $Y = (\Omega,k)$ be the quasihyperbolization of $\Omega$. Then for each $\alpha > 0$ there is a constant $\beta_{0} = \beta_{0}(\alpha,A,C_{\nu})$ (if $\Omega$ is unbounded) or $\beta_{0} = \beta_{0}(\alpha,A,C_{\nu},\phi(\Omega))$ (if $\Omega$ is bounded) such that for any $b \in \hat{\mathcal{B}}(Y)$ and any $\beta \geq \beta_{0}$ we have that $\nu_{\alpha,\beta,b}$ is doubling on $\Omega_{b}$ with doubling constant $C_{\nu_{\alpha,\beta,b}}$ depending only on $\alpha$, $\beta$, $A$, and $C_{\nu}$ (and $\phi(\Omega)$ if $\Omega$ is bounded). 

If furthermore the metric measure space $(\Omega,d,\nu)$ supports a $p$-Poincar\'e inequality for a given $p \geq 1$ then the metric measure space $(\Omega_{b},d_{b},\nu_{\alpha,\beta,b})$ supports a $p$-Poincar\'e inequality with constants depending only on $\alpha$, $\beta$, $A$, $C_{\nu}$, $p$, and the constants in \eqref{uniformly local Poincare} (and $\phi(\Omega)$ if $\Omega$ is bounded). 
\end{thm}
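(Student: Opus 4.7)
The strategy is to deduce this theorem as a direct application of Theorems \ref{doubling threshold} and \ref{global Poincare} with the Gromov hyperbolic space $X$ of those theorems played by the quasihyperbolization $Y = (\Omega,k)$ and the starting measure $\mu$ played by $\mu^\alpha$. Indeed, by construction $\Omega_b = Y_{\e,b}$ and $\nu_{\alpha,\beta,b} = (\mu^\alpha)_{\beta,b}$, so the two conclusions of the theorem translate verbatim into the two conclusions of Theorems \ref{doubling threshold} and \ref{global Poincare} in this setting, once the hypotheses of those theorems are verified.

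First I would assemble the uniformization data $(\delta,K,\e,M)$ for $Y$. The quasihyperbolization $Y$ is $\delta$-hyperbolic with $\delta = \delta(A)$ by \cite[Theorem 3.6]{BHK}, and for the fixed $\e = \e(A)$ already chosen in this section the density $\rho_{\e,b}$ is a GH-density on $Y$ with constant $M = 20$ for every $b \in \hat{\mathcal{B}}(Y)$ by the discussion following \eqref{GH inequality}. For the rough starlikeness constant $K$ we invoke \cite[Proposition 3.3]{Bu21}: if $\Omega$ is unbounded then $Y$ is $K$-roughly starlike from any point of $Y \cup \p Y$ with $K = K(A)$, while if $\Omega$ is bounded then the same holds with $K = K(A,\phi(\Omega))$. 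This single step accounts for the extra dependence on $\phi(\Omega)$ that appears in the bounded case of the statement.

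Second I would transfer the hypotheses on $\nu$ to hypotheses on $\mu^\alpha$ as a measure on $Y$. The uniformly local doubling claim for $\mu^\alpha$ on $Y$, with doubling radius $R_0 = 1$ and constant $C_{\mu^\alpha}$ depending only on $A$, $\alpha$, and $C_\nu$, is exactly \cite[Proposition 7.3]{BBS20}. For the second part of the theorem we additionally invoke the companion Poincar\'e transplantation result of \cite{BBS20}, which upgrades a $p$-Poincar\'e inequality for $\nu$ on $\Omega$ to a uniformly local $p$-Poincar\'e inequality for $\mu^\alpha$ on $Y$ with constants depending only on $\alpha$, $A$, $p$, $C_\nu$, and the original Poincar\'e constants of $\nu$ on $\Omega$.

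With these inputs in hand, Theorem \ref{doubling threshold} produces a threshold $\beta_0$ depending only on $R_0 = 1$ and $C_{\mu^\alpha}$, hence only on $\alpha$, $A$, $C_\nu$ (and $\phi(\Omega)$ in the bounded case, via $K$), such that $\nu_{\alpha,\beta,b}$ is doubling on $\bar{\Omega}_b = \bar{Y}_{\e,b}$ for every $\beta \geq \beta_0$ and every $b \in \hat{\mathcal{B}}(Y)$, with doubling constant depending only on the listed parameters. Restricting to $\Omega_b$ is automatic since $\nu_{\alpha,\beta,b}(\p \Omega_b) = 0$ by our convention. For the second half of the theorem, Theorem \ref{global Poincare} applied with this same $\beta$ then upgrades the uniformly local $p$-Poincar\'e inequality for $\mu^\alpha$ on $Y$ to a global $p$-Poincar\'e inequality for $\nu_{\alpha,\beta,b}$ on $\Omega_b$. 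No real obstacle remains once the inputs are assembled; the only delicate part of the argument is the bookkeeping of parameter dependencies, in particular isolating how the bounded case forces the dependence on $\phi(\Omega)$ through the rough starlikeness constant $K$ produced by \cite[Proposition 3.3]{Bu21}.
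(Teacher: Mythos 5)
Your proposal is correct and is essentially identical to the paper's own argument: the paper likewise verifies the uniformization data for $Y=(\Omega,k)$ via \cite[Theorem 3.6]{BHK} and \cite[Proposition 3.3]{Bu21}, transfers the hypotheses on $\nu$ to $\mu^{\alpha}$ via \cite[Propositions 7.3 and 7.4]{BBS20}, and then applies Theorems \ref{doubling threshold} and \ref{global Poincare} directly. The only (harmless) discrepancy is a minor bookkeeping point about exactly where the $\phi(\Omega)$-dependence enters in the bounded case.
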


The final claim follows from the fact that under the hypotheses of the proposition the metric measure space $(Y,k,\mu^{\alpha})$ supports a uniformly local $p$-Poincar\'e inequality by \cite[Proposition 7.4]{BBS20} with radius and constants depending only on $\alpha$, $A$, $C_{\nu}$, $p$, and the constants in the $p$-Poincar\'e inequality for $(\Omega,d,\nu)$. Hence we can directly apply Theorem \ref{global Poincare} to $(Y_{\e,b},k_{\e,b},(\mu^{\alpha})_{\beta,b}) = (\Omega_{b},d_{b},\nu_{\alpha,\beta,b})$ in this case. 

We remark that it is not immediately clear what choices of $\alpha$ and $\beta$ are natural in the context of Proposition \ref{inversion doubling}, which is why we have left them as free parameters. Theorem \ref{inversion doubling} shows that $(\Omega_{b},d_{b},\nu_{b,\beta,\alpha})$ is always a doubling metric measure space once $\beta$ is large enough in relation to $\alpha$, and that $p$-Poincar\'e inequalities transfer over to this space from $(\Omega,d,\nu)$.

\bibliographystyle{plain}
\bibliography{GlobalDoubling}

\end{document}